\DeclareMathOperator{\sign}{sign}
\DeclareMathOperator{\supp}{supp}
\DeclareMathOperator{\dist}{dist}
\DeclareMathOperator{\Res}{Res}
\newcommand{\eps}{\varepsilon}
\newcommand{\BE}{\mathrm{BE}}
\renewcommand{\leq}{\leqslant}
\renewcommand{\geq}{\geqslant}
\DeclareMathOperator{\vp}{v.p.}
\newcommand{\QE}{\mathrm{QE}}
\newcommand{\scalprod}[2]{\langle{#1},{#2}\rangle}
\newtheorem{Le}{Lemma}[section]
\newtheorem{Def}{Definition}[section]
\newtheorem{Fact}{Fact}
\newtheorem{Th}{Theorem}[section]
\newtheorem{Cor}[Le]{Corollary}
\newtheorem{Rem}[Le]{Remark}
\newtheorem{Conj}{Conjecture}
\numberwithin{equation}{section}
\author{D.~M.~Stolyarov \thanks{Supported by the Chebyshev Laboratory  (Department of Mathematics and Mechanics, St. Petersburg State University) RF Government grant 11.G34.31.0026, by JSC ``Gazprom Neft'', and by RFBR grant no. 14-01-00198 A, and by Rokhlin grant.}}
\title{Bilinear embedding theorems for differential operators in~$\mathbb{R}^2$}
\begin{document}
\maketitle
\begin{abstract}
We prove bilinear inequalities for differential operators in~$\mathbb{R}^2$.  Such type inequalities turned out to be useful for anisotropic embedding theorems for overdetermined systems and the limiting order summation exponent. However, here we study the phenomenon in itself. We consider elliptic case, where our analysis is complete, and non-elliptic, where it is not. The latter case is related to Strichartz estimates in a very easy case of two dimensions. 
\end{abstract}

\tableofcontents

\section{Introduction}\label{SIntro}
The aim of this paper is to provide some generalizations  of the famous Gagliardo--Nirenberg inequality in its easiest form:
\begin{equation}\label{GagliardoNirenberg}
\big|\scalprod{f}{g}_{L_2(\mathbb{R}^2)}\big| \lesssim \|\partial_1f\|_{L_1(\mathbb{R}^2)}\|\partial_2g\|_{L_1(\mathbb{R}^2)}.
\end{equation} 
Here and in what follows we  write ``$a \lesssim b$''  instead of ``$a \leq Cb$ for some uniform constant~$C$'' for brevity; we also write~$a \asymp b$ when~$a \lesssim b$ and~$b \lesssim a$. The symbol~$\partial_j$,~$j=1,2$, denotes the differentiation with respect to the~$j$th variable. 

To be more precise, we study estimates on a scalar product of two functions
in some Hilbert space (in this paper, some Sobolev space of fractional order) in terms of a product of~$L_1$-norms of some differential polynomials applied to these functions.  For the author, the interest in inequalities of
such type originated from the work on non-isomorphism problems for Banach spaces of smooth functions and embedding theorems used there, see the short report~\cite{KMS} and the preprint~\cite{KMSpreprint}.

We are going to use some formalism to make our statements shorter. Suppose~$k$ and~$l$ to be natural numbers,~$\alpha$ and~$\beta$ to be real non-negative numbers, and~$\sigma$ and~$\tau$ to be complex non-zero numbers. By the symbol~$\BE(k,l,\alpha,\beta,\sigma,\tau)$ we mean the statement that the inequality
\begin{equation}\label{BilinearEmbeddings}
\left|\langle f,g\rangle_{W_2^{\alpha,\beta}(\mathbb{R}^2)}\right| \lesssim \left\|(\partial_1^k -\tau\partial_2^l)f\right\|_{L_1(\mathbb{R}^2)}\left\|(\partial_1^k -\sigma\partial_2^l)g\right\|_{L_1(\mathbb{R}^2)}
\end{equation}
holds true for any Schwartz functions~$f$ and~$g$. The scalar product on the left is taken in the fractional Sobolev space,
\begin{equation}\label{norma}
\langle f,g\rangle_{W_2^{\alpha,\beta}(\mathbb{R}^2)}=
\int_{\mathbb{R}^2}\hat{f}(\xi,\eta)\overline{\hat{g}(\xi,\eta)}|\xi|^{2\alpha}|\eta|^{2\beta}\,d \xi d \eta.
\end{equation}
Here and in what follows, by~$\hat \varphi$ we denote the Fourier transform of a function~$\varphi$. When~$\alpha$ and~$\beta$ are integers,~$W^{\alpha,\beta}_2$ reduces to the classical anisotropic homogeneous Sobolev space defined by the semi-norm~$\|\partial_1^{\alpha}\partial_2^{\beta}f\|_{L_2(\mathbb{R}^2)}$. Moreover, if~$p$ and~$q$ are integers, then
\begin{equation}\label{DerivationOfScalarProduct}
\scalprod{\partial_1^p\partial_2^q f}{\partial_1^p\partial_2^q g}_{W^{\alpha,\beta}_2} = (2\pi) ^{2p + 2q} \scalprod{f}{g}_{W_2^{\alpha + p,\beta + q}}.
\end{equation}

Inequalities~\eqref{BilinearEmbeddings} are homogeneous not in a usual sense, they are \emph{anisotropic} homogeneous. The transformations that preserve the right-hand side of~\eqref{BilinearEmbeddings}
 (and thus have to preserve the left-hand side) are not of the form~$(x,y) \to (\lambda x, \lambda y)$, but of the form~$(x,y)\to(\lambda^l x,\lambda^k y)$. The basic theory of such type embedding theorems can be found, e.g. in~\cite{BIN} (see also~\cite{Kol} and~\cite{S}, where the case of the limiting order summation exponent is considered). Using this rescaling, we see that~$\BE(k,l,\alpha,\beta,\sigma,\tau)$ may hold true only when
\begin{equation}\label{line}
\frac{\alpha + \frac{1}{2}}{k} + \frac{\beta + \frac{1}{2}}{l} = 1.
\end{equation}

It may seem that the validity of~$\BE$ should not depend on~$\sigma$ and~$\tau$ too much. However, at least one point is obvious from the very beginning:  \emph{elliptic} cases should be distinguished from non-elliptic ones.
\begin{Def}\label{Ellipticity}
We say that the set of numbers~$(k,l,\sigma,\tau)$ is elliptic if both polynomials~$\xi^k - \sigma_1 \eta^l$ and~$\xi^k - \tau_1 \eta^l$\textup, $\tau_1=(2\pi i)^{l-k}\tau$\textup,~$\sigma_1=(-1)^{l-k}(2\pi i)^{l-k}\overline{\sigma}$\textup, do not have real roots except~$0$. 
\end{Def}
Here and in what follows,~$\tau_1$ always denotes~$(2\pi i)^{l-k}\tau$,~$\sigma_1$ always denotes~$(-1)^{l-k}(2\pi i)^{l-k}\overline{\sigma}$.
In the case where one of the numbers~$k$ and~$l$ is odd, ellipticity means that the numbers~$\tau_1$ and~$\sigma_1$ are not real. When both~$k$ and~$l$ are even, the numbers~$\sigma_1$ and~$\tau_1$ may be real negative. We begin with the elliptic case. 
\begin{Th}\label{T3}
Suppose that~$k$ and~$l$ are natural and~$(k,l,\sigma,\tau)$ is elliptic. The statement~$\BE(k,l,\alpha,\beta,\sigma,\tau)$ holds true if and only if one of the numbers~$k$ and~$l$ is odd\textup,~$\alpha = \frac{k-1}{2}$\textup,~$\beta=\frac{l-1}{2}$\textup, and the numbers~$\sigma_1$ and~$\tau_1$ have non-zero imaginary parts of the same sign.
\end{Th}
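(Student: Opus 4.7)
The plan is to work on the Fourier side. Setting $F=(\partial_1^k-\tau\partial_2^l)f$ and $G=(\partial_1^k-\sigma\partial_2^l)g$, the definition~\eqref{norma} rewrites~\eqref{BilinearEmbeddings} (up to a multiplicative constant) as
\[
\Big|\int_{\mathbb{R}^2}\hat F\,\overline{\hat G}\,K\,d\xi\,d\eta\Big|\lesssim\|F\|_{L_1}\|G\|_{L_1},\quad K(\xi,\eta):=\frac{|\xi|^{2\alpha}|\eta|^{2\beta}}{(\xi^k-\tau_1\eta^l)(\xi^k-\sigma_1\eta^l)}.
\]
Since $F$ and $G$ range over a dense subset of $L_1$, this is equivalent — by the duality $(L_1)^*=L_\infty$ and the Plancherel identity — to $K$ being the Fourier transform of a bounded function on~$\mathbb{R}^2$. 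The task thus reduces to characterizing exactly when this holds.

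For necessity I argue in three steps. The anisotropic dilation $(x,y)\mapsto(\lambda^l x,\lambda^k y)$ forces the scaling line~\eqref{line}. Next, $K$ is anisotropically homogeneous of degree $-(k+l)$, hence its Fourier preimage is homogeneous of degree~$0$ and is bounded only if its angular profile is bounded on the anisotropic unit sphere; testing~\eqref{BilinearEmbeddings} against wavepackets frequency-localized near the coordinate axes $\{\xi=0\},\{\eta=0\}$ then pins down $\alpha=(k-1)/2$, $\beta=(l-1)/2$ as the unique admissible point on~\eqref{line}, and simultaneously rules out the case where both $k$ and $l$ are even (in which case both $|\xi|^{k-1}$ and $|\eta|^{l-1}$ carry odd exponents and produce axis singularities that no cancellation in the denominator absorbs). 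Finally, for the sign condition, modulated Gaussians with momenta approaching the complex roots of one of $\xi^k-\tau_1\eta^l$ or $\xi^k-\sigma_1\eta^l$ force the decision: if $\Im\tau_1$ and $\Im\sigma_1$ had opposite signs, the residues arising from the two denominator factors would inhabit opposite half-planes and their contributions to the Fourier preimage would add rather than cancel, producing a logarithmic divergence.

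For sufficiency, assume WLOG $k$ is odd, $\alpha=(k-1)/2$, $\beta=(l-1)/2$, and $\Im\tau_1,\Im\sigma_1>0$. Then $|\xi|^{k-1}=\xi^{k-1}$ is polynomial, and $\xi^k-\tau_1\eta^l$ factors over~$\mathbb{C}$ as $\prod_{j=1}^k(\xi-\zeta_j(\eta))$, where the $\zeta_j(\eta)$ are the real-analytic branches of the $k$-th root of $\tau_1\eta^l$ (well-defined because $k$ is odd); analogously for $\sigma_1$ with branches $\zeta'_m(\eta)$. By ellipticity, none of the roots $\zeta_j(\eta),\zeta'_m(\eta)$ are real for $\eta\neq 0$. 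Partial fractions in~$\xi$ decompose $K$ into a sum of simple-pole terms whose inverse Fourier transforms in~$\xi$ are explicit one-sided exponentials supported on the half-line of $x$ matching the sign of the imaginary part of the corresponding root. The same-sign hypothesis on $\Im\tau_1,\Im\sigma_1$ then ensures that, upon grouping terms by half-plane and carrying out the $\eta$-integration, the $|\eta|^{2\beta-l}=|\eta|^{-1}$ factor left behind by the $\xi$-residue calculus is absorbed by the cancellation between the upper- (resp.\ lower-) half-plane root contributions of~$\tau_1$ and~$\sigma_1$, yielding a bounded function of $(x,y)$.

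The main obstacle is this last sufficiency step. Two delicate points arise: the non-integrable $|\eta|^{-1}$ singularity left after the $\xi$-integration must be absorbed by the same-sign cancellation rather than merely regularized as a principal value, and the non-smooth dependence of $\zeta_j(\eta)$ at $\eta=0$ must be tracked through the $\eta$-integration with uniform control. I expect the estimate to reduce, after an anisotropic polar decomposition, to boundedness of a one-dimensional oscillatory integral, where the same-sign condition enters precisely as the sign of a phase derivative guaranteeing convergence.
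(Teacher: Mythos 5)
Your sufficiency outline is essentially the paper's argument: after the substitution $\xi=\rho|\eta|^{l/k}$ one computes the $\rho$-integral by residues, the same-sign hypothesis guarantees that $\rho^k=\tau_1$ and $\rho^k=\sigma_1$ have equally many roots in the upper half-plane, the residues pair up into differences $\bigl(e^{iu|\eta|^{l/k}}-e^{iv|\eta|^{l/k}}\bigr)/\eta$ whose vanishing at $\eta=0$ absorbs the $|\eta|^{-1}$, and $\Im u,\Im v>0$ gives exponential decay at infinity. Note that no oscillatory-integral or stationary-phase input is needed in the elliptic case (your closing expectation about ``the sign of a phase derivative'' points toward the non-elliptic Theorem~\ref{TVanishing}); the elliptic estimate closes by absolute convergence. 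But you have only stated the key cancellation as something you ``expect,'' and one small inaccuracy should be fixed: $(\partial_1^k-\tau\partial_2^l)f$ does \emph{not} range over a dense subset of $L_1$ (its closure is the zero-mean subspace, cf.\ Fact~\ref{Spaces}), so the clean equivalence with ``$K$ is the Fourier transform of a bounded function'' via $(L_1)^*=L_\infty$ needs more care; the safe formulation is one-directional (uniform boundedness of the truncated kernels implies $\BE$).

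The genuine gap is in the necessity direction, where the mechanisms you propose do not match how the obstruction actually arises and I do not believe they can be made to work. By ellipticity the denominator is nonvanishing on the coordinate axes away from the origin, and the failure of $\BE$ for, say, $k$ odd, $\alpha<\frac{k-1}{2}$ on the line \eqref{line}, and $\Im\sigma_1,\Im\tau_1$ of the same sign is \emph{not} an axis-singularity or wavepacket-localization phenomenon: the correct test functions are annular cutoffs $V(\xi,\eta)=v\bigl(\sqrt{\xi^{2k}+\eta^{2l}}\bigr)$ spread over all directions, and $\BE$ forces the single scalar identity
\begin{equation*}
\int_{-\infty}^{\infty}\frac{|\rho|^{2\alpha}\,d\rho}{(\rho^k-\tau_1)(\rho^k-\sigma_1)}=0 .
\end{equation*}
The entire content of the necessity proof is showing this integral is nonzero for every parameter choice outside the admissible set, and that requires a nontrivial argument: after the substitution $s=\rho^k$ one must prove that $\Phi(\zeta)=\int |s|^{(2\alpha-k+1)/k}(s-\zeta)^{-1}\,ds$ is injective off the reals, which is done by combining its homogeneity with the uniqueness theorem to identify $\Phi$ with a constant multiple of a power branch $\Delta(\zeta)=\zeta^{(2\alpha-k+1)/k}$ (and the case $\sigma_1=\tau_1$ then follows because an injective holomorphic map has nonvanishing derivative). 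Likewise, the both-even case is excluded not by ``axis singularities that no cancellation absorbs'' but because $\int_0^{\infty}\bigl((\rho-\tau_1)(\rho-\sigma_1)\bigr)^{-1}d\rho=\bigl(\log(-\sigma_1)-\log(-\tau_1)\bigr)/(\sigma_1-\tau_1)\ne 0$. Your sign-condition heuristic (residues in opposite half-planes failing to cancel) is the right intuition for the sub-case $\alpha=\frac{k-1}{2}$, but as stated it is only the easiest of the four necessity cases, and the claimed ``logarithmic divergence'' from modulated Gaussians is not substantiated.
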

The ``if'' part of this theorem was proved in the preprint~\cite{KMS}. However, we repeat it here both for the sake of completeness and because it serves as a good introduction to a similar theorem for the non-elliptic case. Theorem~\ref{T3} has a special particular case~$\sigma = \tau$.
\begin{Cor}\label{sigmaEqualstau}
The statement~$\BE(k,l,\alpha,\beta,\sigma,\sigma)$ does not hold if~$(k,l,\sigma,\sigma)$ is elliptic.
\end{Cor}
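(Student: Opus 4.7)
The plan is to deduce the corollary directly from Theorem~\ref{T3} by noting the algebraic identity that couples~$\sigma_1$ and~$\tau_1$ once~$\sigma=\tau$: namely that~$\sigma_1$ is the complex conjugate of~$\tau_1$. From this identity the imaginary parts will automatically be opposite in sign, so the spectral condition in Theorem~\ref{T3} cannot be met.

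First I would compute~$\overline{\tau_1}$. Using~$\overline{2\pi i}=-2\pi i$ one has~$\overline{(2\pi i)^{l-k}}=(-1)^{l-k}(2\pi i)^{l-k}$, and therefore
\[
\overline{\tau_1}=\overline{(2\pi i)^{l-k}\tau}=(-1)^{l-k}(2\pi i)^{l-k}\,\overline{\tau}.
\]
When~$\sigma=\tau$, the right-hand side is exactly the definition of~$\sigma_1$, so~$\sigma_1=\overline{\tau_1}$, and in particular~$\mathrm{Im}\,\sigma_1=-\mathrm{Im}\,\tau_1$.

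Next I would split on the parities of~$k$ and~$l$. If both are even, Theorem~\ref{T3} already rules out~$\BE(k,l,\alpha,\beta,\sigma,\sigma)$ because a necessary condition for~$\BE$ is that at least one of~$k,l$ be odd. If one of~$k,l$ is odd, ellipticity of~$(k,l,\sigma,\sigma)$ forces~$\tau_1$ (and hence~$\sigma_1$) to be non-real, so~$\mathrm{Im}\,\tau_1\ne 0$ and~$\mathrm{Im}\,\sigma_1\ne 0$; but by the displayed identity these two imaginary parts have opposite signs, in direct conflict with the requirement of Theorem~\ref{T3} that they be of the \emph{same} sign. Either way~$\BE$ fails.

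I do not anticipate any substantive obstacle: the whole argument is a bookkeeping check, and the only nontrivial ingredient is the conjugation identity~$\sigma_1=\overline{\tau_1}$, which reflects the symmetry between the two differential operators on the right-hand side of~\eqref{BilinearEmbeddings} when their coefficients coincide.
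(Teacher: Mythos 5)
Your argument is correct and is exactly the intended deduction: the paper presents this corollary as an immediate special case of Theorem~\ref{T3}, resting on the same conjugation identity~$\sigma_1=\overline{\tau_1}$ when~$\sigma=\tau$, which makes the ``imaginary parts of the same sign'' condition impossible to satisfy. Nothing to add.
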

Moreover, it will follow from the proof that the inequality
\begin{equation*}
\|f\|_{W^{\alpha,\beta}_2} \lesssim \big\|(\partial_1^k - \sigma\partial_2^l)\big\|_{L_1}
\end{equation*}
does not hold when~$(k,l,\sigma,\sigma)$ is elliptic. 
Theorem~\ref{T3} is, in some sense, a surprise. Usually, if one has an embedding into~$W^{\frac{k-1}{2},\frac{l-1}{2}}_2$, then it is natural to expect that it is also valid for all other pairs~$(\alpha,\beta)$ that satisfy equation~\eqref{line} (for example, all the embedding theorems from~\cite{Kol} and~\cite{S} are of this type). However, here this principle does not work, mainly due to the fact that we are working with bilinear embeddings.

The limiting case~$\sigma = \infty,\tau = 0$ was considered in~\cite{PS}, where the inequality
\begin{equation*}
\left|\langle f,g\rangle_{W_2^{\frac{k-1}{2},\frac{l-1}{2}}(\mathbb{R}^2)}\right| \lesssim \left\|\partial_1^k f\right\|_{L_1(\mathbb{R}^2)}\left\|\partial_2^l g\right\|_{L_1(\mathbb{R}^2)}
\end{equation*}
was proved. As a consequence, this inequality leads to the inequality 
\begin{equation*}
\|f\|_{W^{\frac{k-1}{2},\frac{l-1}{2}}_2}^2 \lesssim \|\partial_1^k f\|_{L_1}\|\partial_2^l f\|_{L_1},
\end{equation*}
which is a very particular case of Solonnikov's embedding theorem, see~\cite{S} (see also~\cite{Kol} for even stronger results and~\cite{KS} for a detailed explanation why this inequality follows from much more general theorems in~\cite{S}). By rescaling, the latter inequality is equivalent to
\begin{equation*}
\|f\|_{W^{\frac{k-1}{2},\frac{l-1}{2}}_2} \lesssim \|\partial_1^k f\|_{L_1} + \|\partial_2^l f\|_{L_1} \asymp \left\|(\partial_1^k -\tau\partial_2^l)f\right\|_{L_1} + \left\|(\partial_1^k -\sigma\partial_2^l)g\right\|_{L_1}
\end{equation*}
provided~$\sigma \ne \tau$. We signalize that~$\BE(k,l,\frac{k-1}{2},\frac{l-1}{2},\sigma,\tau)$ does not follow from this inequality (because by Theorem~\ref{T3} the former statement may not hold);  the multiplicative and the additive forms are not equivalent here.

When we consider non-elliptic cases and~$k \ne l$ (the case~$k=l$ is special), we are working with something like the Strichartz estimates (see, e.g.~\cite{T}). This hints us that the treatment of non-elliptic cases will lead us to some oscillatory integrals (fortunately, one-dimensional) that are unavoidable when dealing with the Strichartz estimates. However, the main difficulty is still the~$L_1$-space on the right-hand side of~\eqref{BilinearEmbeddings}. 
\begin{Th}\label{TVanishing}
Suppose that~$k$ and~$l$ are natural\textup, one of them is odd\textup, and~$k\ne l$. Suppose that~$\sigma$ and~$\tau$ are complex numbers such that both numbers~$\tau_1=(2\pi i)^{l-k}\tau$ and~$\sigma_1=(-1)^{l-k}(2\pi i)^{l-k}\overline{\sigma}$ are real. In this case\textup,~$\BE(k,l,\frac{k-1}{2},\frac{l-1}{2},\sigma,\tau)$ holds true.
\end{Th}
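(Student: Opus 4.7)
The approach is to reduce the bilinear inequality to an $L^\infty$-bound on the inverse Fourier transform of a multiplier, exactly as in the proof of Theorem~\ref{T3}. Writing $F=(\partial_1^k-\tau\partial_2^l)f$, $G=(\partial_1^k-\sigma\partial_2^l)g$ and applying Plancherel gives
\[
\langle f,g\rangle_{W_2^{(k-1)/2,(l-1)/2}}=(2\pi)^{-2k}\int_{\mathbb R^2}m\,\hat F\,\overline{\hat G}\,d\xi d\eta,\qquad m=\frac{|\xi|^{k-1}|\eta|^{l-1}}{(\xi^k-\tau_1\eta^l)(\xi^k-\sigma_1\eta^l)},
\]
and by the convolution theorem this equals $(2\pi)^{-2k}\int(\check m\ast F)\overline{G}\,dxdy$. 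Thus it suffices to prove $\check m\in L^\infty(\mathbb R^2)$, interpreting $m$ as a tempered distribution via suitable principal values. Since $m$ is anisotropically homogeneous of degree $-(k+l)$, $\check m$ is anisotropically homogeneous of degree $0$, so boundedness on a compact section of the anisotropic sphere is enough.

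Assume $k$ is odd (this is without loss of generality by symmetry). Then $|\xi|^{k-1}=\xi^{k-1}$, and using the identities
\[
\frac{\xi^{k-1}}{\xi^k-c}=\frac{1}{k}\sum_{j=0}^{k-1}\frac{1}{\xi-c^{1/k}\omega^j},\quad\omega=e^{2\pi i/k},\qquad\frac{1}{(\xi^k-\tau_1\eta^l)(\xi^k-\sigma_1\eta^l)}=\frac{1}{(\sigma_1-\tau_1)\eta^l}\Bigl(\tfrac1{\xi^k-\sigma_1\eta^l}-\tfrac1{\xi^k-\tau_1\eta^l}\Bigr)
\]
(with the degenerate case $\sigma_1=\tau_1$ handled by differentiation in the parameter) one decomposes $m$ into simple fractions in $\xi$. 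Contour integration then gives the inverse Fourier transform in $\xi$ of each piece in closed form: non-real roots contribute $\pm2\pi ie^{2\pi i\zeta x}\mathbf{1}_{\pm x>0}$, while the single real root $\zeta=c^{1/k}\eta^{l/k}$ (which exists precisely because $c$ is real and $k$ is odd) contributes the principal-value term $i\pi\sign(x)e^{2\pi ic^{1/k}\eta^{l/k}x}$. These contributions are all uniformly bounded in $(x,\eta)$.

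The remaining inverse Fourier transform in $\eta$ then amounts to estimating one-dimensional oscillatory integrals such as
\[
\operatorname{p.v.}\int\frac{|\eta|^{l-1}}{\eta^l}\bigl(e^{2\pi i\sigma_1^{1/k}\eta^{l/k}x}-e^{2\pi i\tau_1^{1/k}\eta^{l/k}x}\bigr)e^{2\pi iy\eta}\,d\eta
\]
and analogous contributions coming from the complex roots. The $1/|\eta|$ singularity at $\eta=0$ cancels thanks to the difference of exponentials, while the behaviour at infinity is controlled, after a dyadic decomposition, by van der Corput's lemma; the phase $y\eta+c^{1/k}x|\eta|^{l/k}$ has only non-degenerate stationary points because $k\neq l$. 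This oscillatory-integral step is the main obstacle, replacing the direct pointwise smoothness argument available in the elliptic case, and it is here that the one-dimensional Strichartz-style estimates mentioned in the introduction enter.
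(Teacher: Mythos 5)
Your proposal follows essentially the same route as the paper: reduce $\BE$ to an $L_\infty$-bound on the (suitably regularized) inverse Fourier transform of the multiplier, isolate the real roots of $\rho^k=\sigma_1$, $\rho^k=\tau_1$ (which exist precisely because $k$ is odd and $\sigma_1,\tau_1$ are real) as principal-value/Sokhotski--Plemelj contributions, pair each $\sigma$-term with the corresponding $\tau$-term so that the difference of exponentials cancels the $1/\eta$ singularity at the origin, and control the resulting one-dimensional oscillatory integral with phase $y\eta+cx|\eta|^{l/k}$, $l/k\ne1$, by van der Corput. Your partial fractions in $\xi$ and the distributional Fourier transform of $\vp\frac{1}{\xi-\zeta}$ are presentational variants of the paper's substitution $\xi=\rho|\eta|^{l/k}$, contour with small semicircles, and quantitative error estimate (Lemma~\ref{error}); the reduction to compactly supported $f,g$, which the paper uses to make the truncated kernels uniformly bounded, is replaced in your write-up by working with the principal value directly, which is acceptable at this level of detail.

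There are, however, two genuine problems. First, the parenthetical claim that the degenerate case $\sigma_1=\tau_1$ is ``handled by differentiation in the parameter'' is false. When $\sigma_1$ and $\tau_1$ are both real, one has $\sigma_1=(2\pi i)^{l-k}\sigma$ and $\tau_1=(2\pi i)^{l-k}\tau$, so $\sigma_1=\tau_1$ forces $\sigma=\tau$, and by Lemma~\ref{KnappExample} the statement $\BE(k,l,\alpha,\beta,\sigma,\sigma)$ fails for $k\ne l$ in the non-elliptic case. Concretely, differentiating in the parameter replaces the difference of exponentials by $x|\eta|^{l/k}e^{icx|\eta|^{l/k}}$, and the integral $\int_1^{\infty}\eta^{l/k-1}e^{i(y\eta+cx\eta^{l/k})}\,d\eta$ is \emph{not} uniformly bounded in $y$: a stationary point at a large $\eta_0$ contributes on the order of $\eta_0^{l/(2k)}$. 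The cancellation between two \emph{distinct} real poles is essential, and the theorem must be read with $\sigma\ne\tau$ (the paper's proof also silently divides by $\sigma_1-\tau_1$, but it does not claim to cover the degenerate case).

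Second, the van der Corput step needs the anisotropic normalization of $x$ to be performed \emph{before} you decide where to switch from the cancellation bound $|e^{ic_1|\eta|^{l/k}x}-e^{ic_2|\eta|^{l/k}x}|\lesssim|x|\,|\eta|^{l/k}$ to the triangle inequality. The phase $\phi(\eta)=y\eta+cx\eta^{l/k}$ has $\phi''(\eta)\asymp|cx|\eta^{l/k-2}$ (no degenerate stationary points, but $|\phi''|$ decays), so the dyadic sum of $\min\bigl(1,|cx|^{-1/2}2^{-jl/(2k)}\bigr)$ over $2^j\ge1$ is only $O(\log(1/|x|))$ if the crossover is made at $\eta\asymp1$; it becomes $O(1)$ only if the crossover is made at $\eta\asymp|x|^{-k/l}$, equivalently after rescaling $\eta$ so that $|x|\asymp1$. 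This is exactly the role of the paper's change of variable $(2\pi a)^{k/l}\eta\to\eta$. Since you invoke the anisotropic homogeneity of $\check m$ at the outset, the fix is available to you, but it must be made explicit; once it is, your dyadic version of the oscillatory estimate is actually slightly cleaner than the paper's substitution $\eta=e^s$, because your $\phi''$ does not involve $y$ and so no exceptional set has to be excised.
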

A simple algebraic trick leads to some positive results for the case where both~$k$ and~$l$ are even.
\begin{Cor}\label{EvenParametersVanishingOp}
Suppose that both numbers~$k$ and~$l$ are even\textup, but one of them contains~$2$ only in the first power. In this case\textup, the statements~$\BE(k,l,\frac{3}{4} k -\frac12,\frac14 l - \frac12,\sigma,\tau)$ and~$\BE(k,l,\frac14 k -\frac12,\frac34 l - \frac12,\sigma,\tau)$ hold true whenever~$\sigma_1$ and~$\tau_1$ are distinct positive real numbers.
\end{Cor}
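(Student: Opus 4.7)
The strategy is to reduce the claim to Theorem~\ref{TVanishing} at half the differential order via a difference-of-squares factorization. Because $l-k$ is even and $\sigma_1,\tau_1$ are positive real numbers, both $\sigma$ and $\tau$ must be real, and one may choose $s,t\in\mathbb{C}$ (real when $\sigma,\tau>0$, purely imaginary when $\sigma,\tau<0$) with $s^2=\sigma$, $t^2=\tau$, so that
\begin{equation*}
\partial_1^k-\tau\partial_2^l = \bigl(\partial_1^{k/2}-t\partial_2^{l/2}\bigr)\bigl(\partial_1^{k/2}+t\partial_2^{l/2}\bigr), \qquad \partial_1^k-\sigma\partial_2^l = \bigl(\partial_1^{k/2}-s\partial_2^{l/2}\bigr)\bigl(\partial_1^{k/2}+s\partial_2^{l/2}\bigr).
\end{equation*}
Set $k'=k/2$ and $l'=l/2$; by hypothesis one of them is odd, and I assume $k\neq l$ so that $k'\neq l'$ and Theorem~\ref{TVanishing} is available. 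For every sign pair $\epsilon,\delta\in\{\pm 1\}$ the half-order parameters $\tau'=\epsilon t$ and $\sigma'=\delta s$ pass the realness requirement of Theorem~\ref{TVanishing}: the identities $\bigl((2\pi i)^{(l-k)/2}t\bigr)^2=\tau_1>0$ and $\bigl((-1)^{(l-k)/2}(2\pi i)^{(l-k)/2}\bar s\bigr)^2=\sigma_1>0$ force $(\tau')_1$ and $(\sigma')_1$ to be real.

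For each of the four sign pairs set
\begin{equation*}
F_\epsilon=\bigl(\partial_1^{k/2}+\epsilon t\partial_2^{l/2}\bigr)f, \qquad G_\delta=\bigl(\partial_1^{k/2}+\delta s\partial_2^{l/2}\bigr)g,
\end{equation*}
so that the factorizations give $(\partial_1^{k/2}-\epsilon t\partial_2^{l/2})F_\epsilon=(\partial_1^k-\tau\partial_2^l)f$ and analogously for $G_\delta$. Theorem~\ref{TVanishing} applied with parameters $(k',l',\epsilon t,\delta s)$ therefore yields the four estimates
\begin{equation*}
\bigl|\scalprod{F_\epsilon}{G_\delta}_{W_2^{(k-2)/4,(l-2)/4}}\bigr| \lesssim \|(\partial_1^k-\tau\partial_2^l)f\|_{L_1}\|(\partial_1^k-\sigma\partial_2^l)g\|_{L_1}.
\end{equation*}
Writing $A=(2\pi i\xi)^{k/2}$ and $B=(2\pi i\eta)^{l/2}$, the Plancherel expansion reads
\begin{equation*}
\scalprod{F_\epsilon}{G_\delta}_{W_2^{(k-2)/4,(l-2)/4}}=\int_{\mathbb{R}^2}(A+\epsilon tB)(\bar A+\delta\bar s\bar B)\hat f\overline{\hat g}\,|\xi|^{k/2-1}|\eta|^{l/2-1}\,d\xi\,d\eta.
\end{equation*}

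The last step is the algebraic combination of these four identities. The full sum $\sum_{\epsilon,\delta}$ annihilates all three off-diagonal terms in $(A+\epsilon tB)(\bar A+\delta\bar s\bar B)$ and leaves $4|A|^2$; the identity $|A|^2|\xi|^{k/2-1}|\eta|^{l/2-1}=(2\pi)^k|\xi|^{3k/2-1}|\eta|^{l/2-1}$ then identifies the sum with $4(2\pi)^k\scalprod{f}{g}_{W_2^{3k/4-1/2,l/4-1/2}}$, and the triangle inequality delivers the first embedding. The alternating sum $\sum_{\epsilon,\delta}\epsilon\delta$ isolates $4t\bar s|B|^2$ instead, producing $4t\bar s(2\pi)^l\scalprod{f}{g}_{W_2^{k/4-1/2,3l/4-1/2}}$; division by $t\bar s\neq 0$ delivers the second embedding. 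The only delicate point is the realness verification for the half-order parameters in all four sign configurations; beyond that, the argument is a bookkeeping expansion of a quadratic form in $A,\bar A,B,\bar B$.
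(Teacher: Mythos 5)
Your proposal is correct and takes essentially the same route as the paper: the paper packages your factorization-plus-polarization step as a separate lemma (apply $\BE(k/2,l/2,\frac{k}{4}-\frac12,\frac{l}{4}-\frac12,\pm\sigma',\pm\tau')$ from Theorem~\ref{TVanishing} to the four pairs $\bigl((\partial_1^{k/2}\pm\tau'\partial_2^{l/2})f,(\partial_1^{k/2}\pm\sigma'\partial_2^{l/2})g\bigr)$ and take linear combinations of the four scalar products), which is exactly your bookkeeping expansion in $A,\bar A,B,\bar B$. Your explicit verification that the half-order parameters $(\tau')_1$, $(\sigma')_1$ are real is a welcome addition that the paper leaves implicit.
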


We also have some negative results for non-elliptic operators. 
\begin{Le}\label{KnappExample}
The statement~$\BE(k,l,\alpha,\beta,\sigma,\tau)$ does not hold when~$\sigma = \tau$ and~$k\ne l$.
\end{Le}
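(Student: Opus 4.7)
The plan is a Knapp-type example at a real point of the characteristic variety $V=\{\xi^k-\tau_1\eta^l=0\}$ of the common operator $P:=\partial_1^k-\tau\partial_2^l$, whose Fourier symbol equals $(2\pi i)^k(\xi^k-\tau_1\eta^l)$. If $(k,l,\tau,\tau)$ is elliptic, the lemma follows from Corollary~\ref{sigmaEqualstau}, so I may assume non-ellipticity. Using the identity $\sigma_1=\overline{\tau_1}$ (valid whenever $\sigma=\tau$) together with Definition~\ref{Ellipticity}, non-ellipticity forces $\tau_1\in\mathbb{R}$ and the existence of a point $(\xi_0,\eta_0)\in V\cap\mathbb{R}^2$ with $(\xi_0,\eta_0)\ne(0,0)$. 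At any such point both coordinates are necessarily nonzero, the gradient $(k\xi_0^{k-1},-l\tau_1\eta_0^{l-1})$ is nonzero, and---this is the one place where the hypothesis $k\ne l$ enters---the curve $V$ has nonzero curvature there.

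For small $\delta>0$ let $R_\delta$ be the rectangle of dimensions $\delta\times\delta^{2}$ centered at $(\xi_0,\eta_0)$, with long edge tangent to $V$ and short edge normal. Define $f_\delta=g_\delta$ by letting $\hat f_\delta$ be a smooth bump of height one supported in $R_\delta$, obtained by an affine rescaling of a fixed Schwartz bump on $[-1,1]^2$. Since $|\xi|^{2\alpha}|\eta|^{2\beta}\asymp|\xi_0|^{2\alpha}|\eta_0|^{2\beta}$ throughout $R_\delta$ for $\delta$ small,
\[
\|f_\delta\|_{W_2^{\alpha,\beta}}^{2}\;\asymp\;|R_\delta|\;=\;\delta^{3}.
\]

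The second-order Taylor expansion of $\xi^k-\tau_1\eta^l$ at $(\xi_0,\eta_0)$ gives $|\xi^k-\tau_1\eta^l|\lesssim\delta^{2}$ on $R_\delta$: the linear part contributes $O(\delta^2)$ via the normal width $\delta^2$, and the quadratic part contributes $O(\delta^2)$ via the tangent width $\delta$. Hence $\|\widehat{Pf_\delta}\|_{L_1}\lesssim\delta^{2}\cdot|R_\delta|\lesssim\delta^{5}$, so $\|Pf_\delta\|_{L_\infty}\le\|\widehat{Pf_\delta}\|_{L_1}\lesssim\delta^{5}$. Since $Pf_\delta$ is Schwartz with Fourier support in $R_\delta$, it is concentrated on the dual rectangle of area $\delta^{-3}$ with Schwartz-decaying tails (routine integration by parts against the smooth Fourier bump), whence
\[
\|Pf_\delta\|_{L_1}\;\lesssim\;\delta^{5}\cdot\delta^{-3}\;=\;\delta^{2}.
\]

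Substituting $f=g=f_\delta$ into $\BE(k,l,\alpha,\beta,\tau,\tau)$ would force $\delta^{3}\lesssim\delta^{4}$, which fails as $\delta\to0$. The only subtle choice is the parabolic scaling $\delta\times\delta^{2}$ of $R_\delta$, dictated by the curvature of $V$ (which is exactly where $k\ne l$ is used); the Schwartz-tail estimate on $Pf_\delta$ outside its dual rectangle is routine.
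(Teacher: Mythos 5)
Your proof is correct and is essentially the paper's argument: the same Knapp-type example on a $\delta\times\delta^{2}$ rectangle tangent to the characteristic curve at a nonzero real point, with the same count $\delta^{3}$ for the left-hand side versus $\delta^{2}$ for each $L_1$-norm (the paper phrases it with $\delta=n^{-1}$, uses a bump that follows the curve rather than a straight rectangle, and actually disproves the stronger linear inequality $\|f\|_{W_2^{\alpha,\beta}}\lesssim\|(\partial_1^k-\sigma\partial_2^l)f\|_{L_1}$, but the exponents and the tail estimate via the dual rectangle are identical). One minor caveat: your assertion that the nonzero curvature is ``exactly where $k\ne l$ is used'' is inessential --- the Taylor bound $|\xi^k-\tau_1\eta^l|\lesssim\delta^{2}$ on $R_\delta$ requires no curvature hypothesis, and indeed the paper remarks that the conclusion persists (even more easily) when $k=l$.
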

This lemma will be an easy consequence of a rather standard trick called the Knapp example~(see~\cite{T}). We did not find an exact reference for our case, so we will repeat this standard stuff. We signalize that the effect that breaks~$\BE$ in this case is totally different from the one we had in Theorem~\ref{T3}. In Lemma~\ref{KnappExample} it is of oscillatory integral nature, whereas in the said theorem it is a singular integral that ``diverges''. We specialize the case~$k\ne l$, because it is more difficult and interesting. Surely, the conclusions of~Theorem~\ref{TVanishing}, Corollary~\ref{EvenParametersVanishingOp}, and~Lemma~\ref{KnappExample} hold true (with a bit different but easier proofs) when~$k=l$. We leave this cases.

The text is organized as follows: in Section~\ref{SElliptic} we deal with the elliptic case (prove Theorem~\ref{T3}); in Section~\ref{SVanish} we work on non-ellipticity cases; finally, in Section~\ref{SFurther}, we give an explanation why our results for non-elliptic cases are not complete and give some related results and conjectures for linear and quadratic inequalities. In this last section we omit detailed proofs, but provide sketches (the results presented there are incomplete, they need further study; moreover, this is a related, but different story).

This paper is of technical character: we believe that neither the statements nor the proofs presented here are general enough (the worst thing is that we are working not with many variables, but with only two of them). However, the effect itself (especially for non-elliptic cases) seems to be new in its nature. Both these facts urge us to write down too many details in Sections~\ref{SElliptic} and~\ref{SVanish}, we apologize for that.

I am grateful to my scientific adviser S. V.~Kislyakov for statement of the problem and attention to my work and to A. I. Nazarov for useful comments.

\section{Elliptic case}\label{SElliptic}
We are going to prove Theorem~\ref{T3} in the following order: first, we prove that~$\BE$ holds if the parameters satisfy the assumptions; second, we construct counterexamples that disprove~$\BE$ in all the remaining cases. Unfortunately, we did not manage to formalize the latter part in a good way: though the ideology of the counterexample is clear, the technical treatment varies for different cases of parameters.

\subsection{Proof of~$\BE$ in the case of an odd number~$k$}\label{SsOddk} 
At least one of the numbers~$k$ and~$l$ is odd; by symmetry, we may suppose that~$k$ is odd.  In what follows, we assume that~$\sigma_1$ and~$\tau_1$ are distinct (the case where they are equal is a bit different, we treat it afterwards).
The functions~$f$ and~$g$ belong to the Schwartz class, so, their Fourier transform is infinitely differentiable and decays rapidly at infinity. Thus, we can represent the scalar product on left-hand side of~\eqref{BilinearEmbeddings}
as a limit,
\begin{equation*}
\int\limits_{\mathbb{R}^2}\hat{f}(\xi,\eta)\overline{\hat{g}(\xi,\eta)}\xi^{k-1}|\eta|^{l-1}\,d \xi d \eta =
\lim_{\substack{\varepsilon\rightarrow 0 \\ R \rightarrow \infty}}\,\int\limits_{\Omega_{\varepsilon,R}} \hat{f}(\xi,\eta)\overline{\hat{g}(\xi,\eta)}\xi^{k-1}|\eta|^{l-1}\,d \xi d \eta,
\end{equation*}
where~$\Omega_{\varepsilon,R}=\{(\xi,\eta)\in\mathbb{R}^2\mid \varepsilon\leqslant |\eta|\leqslant R\}$. 
Next, we replace the Fourier transforms in the integrand by their expressions
in terms of~$f_1$ and~$g_1$ found from the formulas
\begin{equation*}
\big((2\pi i\xi)^k - \tau (2\pi i\eta)^l\big)\hat{f}(\xi,\eta) =
\hat{f}_1(\xi,\eta); \quad \big((2\pi i\xi)^k - \sigma (2\pi i\eta)^l\big)\hat{g}(\xi,\eta) = \hat{g}_1(\xi,\eta),
\end{equation*}
so, the expression~$\|f_1\|_{L_1}\|g_1\|_{L_1}$ coincides with the right-hand side of~\eqref{BilinearEmbeddings}. We must estimate the quantity
\begin{equation*}
\lim_{\substack{\varepsilon\rightarrow 0 \\ R \rightarrow \infty}}\,
\int\limits_{\Omega_{\varepsilon,R}}\frac{|\eta|^{l - 1}\xi^{k - 1}\hat{f}_1(\xi,\eta)\overline{\hat{g}_1(\xi,\eta)} \,d\xi d\eta}{\big((2\pi i\xi)^k -
\tau (2\pi i\eta)^l\big)\big((-2\pi i\xi)^k - \overline{\sigma} (-2\pi i\eta)^l\big)}.
\end{equation*}
The denominator of the integrand does not vanish on~$\mathbb{R}^2$ except at zero, due to the assumptions of ellipticity.
Then (for~$\varepsilon$ and~$R$ fixed) we replace~$\hat{f}_1$ and~$\hat{g}_1$ in the last
formula by their definitions in terms of~$f_1$ and~$g_1$, and change the order of integration:
\begin{equation}\label{quantity}
\lim_{\substack{\varepsilon \rightarrow 0 \\ R \rightarrow \infty}}
\iint F(\varepsilon,R,x_1,x_2,y_1,y_2)
f_1(x_1,x_2)\overline{g_1(y_1,y_2)} \,dy_1 dy_2 dx_1 dx_2,
\end{equation}
where
\begin{equation}\label{integrand}
F(\varepsilon,R,x_1,x_2,y_1,y_2)=\int\limits_{\Omega_{\varepsilon,R}}\frac{|\eta|^{l-1}\xi^{k-1}
e^{2\pi i ((x_1 - y_1)\xi + (x_2 - y_2)\eta)}}{((2\pi i\xi)^k - \tau (2\pi i\eta)^l)((-2\pi i\xi)^k -
\overline{\sigma} (-2\pi i\eta)^l)} d\xi d\eta.
\end{equation}

To prove inequality~\eqref{BilinearEmbeddings}, we must show that the modulus of the quantity
\eqref{quantity} does not exceed the value~$C\|f_1\|_1\|g_1\|_1$. For this, we prove that the function
\eqref{integrand} is bounded uniformly in all~$\eps$ and~$R$, $0<\varepsilon\leqslant R<\infty$, and almost all quads of reals~$x_1,\,x_2,\,y_1$, $y_2$. The things become slightly more transparent if we integrate in~\eqref{integrand} first with respect to~$\xi$ and then with respect to~$\eta$, and in the inner integral introduce
a new variable~$\rho$ by setting~$\xi=\rho |\eta|^{l/k}$. This yields the formula
\begin{equation*}
F(\varepsilon,R,x_1,x_2,y_1,y_2)=\frac{1}{(2\pi)^{2k}}\!\!\!\!\int\limits_{\varepsilon\leqslant |\eta|\leqslant R}
\!\!\!\!\!|\eta|^{-1}\!\!\!\int\limits_{-\infty}^{+\infty}\frac{\rho^{k-1}e^{2\pi i(a\rho|\eta|^{l/k}+b\eta)}}{\big(\rho^k -
\tau_1(\sign\eta)^l\big)\big(\rho^k -\sigma_1(\sign\eta)^l\big)}\,d\rho d\eta,
\end{equation*}
where we have put~$a=x_1 - y_1$,~$b=x_2-y_2$, and~$\tau_1$ and~$\sigma_1$ were introduced in Definition~\ref{Ellipticity}.

Here integration in~$\eta$ is over the union~$[-R,-\varepsilon]\cup [\varepsilon,R]$, and we will prove
the boundedness of the integral over each of these two intervals separately. Because of symmetry, we
consider only the interval~$[\varepsilon,R]$ (then~$\sign\eta$ in the denominator disappears). For definiteness,
we assume that $a>0$ (the opposite case reduces to this one if we change $\rho$ by $-\rho$; note that we may drop the case of $a=0$ because
it corresponds to a set of measure $0$). After that, we take
$(2\pi a)^{k/l}\eta$ for a new variable in the outer integral; this will modify the parameters $b$, $\varepsilon$, and $R$,
but will allow us to assume that $2\pi a=1$. So, finally, we must show that the following integral is bounded uniformly
in~$\eps$,~$R$,~$0\leqslant\varepsilon <R$, and~$b$:
\begin{equation}\label{ff}
\int\limits_{\varepsilon}^R \eta^{-1}\left(\int\limits_{-\infty}^{\infty}
\frac{\rho^{k-1}e^{i(\rho|\eta|^{l/k}+b\eta)}}{(\rho^k -\tau_1)(\rho^k -\sigma_1)}\,d\rho\right)\,d\eta.
\end{equation}
We will calculate the
integral with respect to~$\rho$ with the
help of the residue formula, perceiving~$\rho$ as a complex variable. Since the integrand
decays rapidly at infinity in the upper half-plane, integration over the contour that consists of the
interval~$[-r, r]$ and the upper part of the circle of radius~$r$ and centered at zero shows,
after the limit passage as~$r\to\infty$, that the integral in question is~$2\pi i$ times the
sum of the residues at the poles of the integrand in the upper half-plane. All these poles are simple and are~$k$th
roots of~$\tau_1$ or~$\sigma_1$ (we have assumed that~$\sigma_1 \ne \tau_1$). Let~$u^k=\tau_1$ (and~$\Re u > 0$). Perceiving the integrand in question as~$\frac{\varphi(\rho)}{\psi(\rho)}$ with~$\psi(\rho)=\rho^k -\tau_1$, we use the fact that~$\varphi$ is regular at~$u$ to conclude that the residue at~$u$ is
\begin{equation*}
\frac{\varphi(u)}{\psi^{\prime}(u)}=\frac{u^{k-1}e^{i(u|\eta|^{l/k}+b\eta)}}{ku^{k-1}(\tau_1 -\sigma_1)}
=\frac{e^{i(u|\eta|^{l/k}+b\eta)}}{k(\tau_1 -\sigma_1)}.
\end{equation*}
Similarly, if~$v^k=\sigma_1$ (and~$\Re v>0$), the residue at~$v$ is equal to
\begin{equation*}
\frac{e^{i(v|\eta|^{l/k}+b\eta)}}{k(\sigma_1 -\tau_1)}.
\end{equation*}
Under our assumptions on~$\sigma$ and~$\tau$, the equations~$\rho^k=\tau_1$ and~$\rho^k=\sigma_1$ have one and the same number of
roots in the upper half plane. This shows that, up to a constant factor, the integral~\eqref{ff} is equal
to a sum of~$(k\pm 1)/2$ expressions of the form
\begin{equation}\label{SimpleIntegral}
\frac{1}{k(\tau_1-\sigma_1)}\int\limits_{\varepsilon}^R e^{ib\eta}\frac{e^{iu|\eta|^{l/k}}-e^{iv|\eta|^{l/k}}}{\eta}d\eta,
\end{equation}
where~$u$ and~$v$ are some~$k$th roots of, respectively,~$\tau_1$ and~$\sigma_1$ in the upper half-plane
(note that, happily, the denominators in the above two formulas for residues are opposite to each other).

We recall that~$b$ is real, so we estimate the absolute value of the integrand by~$C|u-v|\eta^{\frac lk -1}$ when~$\eta \leq 1$ and by~$e^{-\Im u |\eta|^{l/k}}+e^{-\Im v |\eta|^{l/k}}$ when~$\eta > 1$. Both integrals~$\int_0^1 \eta^{\frac lk - 1}\,d\eta$ and~$\int_1^{\infty}(e^{-\Im u |\eta|^{l/k}}+e^{-\Im v |\eta|^{l/k}}) \,d\eta$ converge. This finishes the proof
of~$\BE(k,l,\frac{k-1}{2},\frac{l-1}{2},\sigma,\tau)$ when~$k$ is odd and~$\sigma_1$ and~$\tau_1$ are distinct numbers with imaginary parts of the same sign.

The case where~$\sigma_1 = \tau_1$ can be treated in a similar way. In this case,  all the poles of the function~$\rho \mapsto\frac{\rho^{k-1}e^{i(\rho|\eta|^{l/k}+b\eta)}}{(\rho^k -\tau_1)(\rho^k -\sigma_1)}$ are of the second order. After calculations, the two-dimensional integral~\eqref{ff} appears to be a linear combination of one-dimensional integrals of the form
\begin{equation*}
\int\limits_{\varepsilon}^R e^{ib\eta}|\eta|^{\frac{l}{k} - 1}e^{iu|\eta|^{l/k}}d\eta.
\end{equation*}
Alternatively, this can be justified by passing to the limit as~$\tau_1 \to \sigma_1$ in formula~\eqref{SimpleIntegral}. This integral is uniformly bounded, again because~$\Im u > 0$.
\begin{Rem}
In fact\textup, we have proved that the distribution~$M_{k,l,\sigma,\tau}$ defined by formula
\begin{equation}\label{Distribution}
\scalprod{M_{k,l,\sigma,\tau}}{\phi} = \lim\limits_{\substack{R \to \infty\\ \eps \to 0}}\,
\int\limits_{\Omega_{\varepsilon,R}}\frac{|\eta|^{l - 1}\xi^{k - 1}\phi(\xi,\eta) \,d\xi d\eta}{\big((2\pi i\xi)^k -
\tau (2\pi i\eta)^l\big)\big((-2\pi i\xi)^k - \overline{\sigma} (-2\pi i\eta)^l\big)},\quad \phi \in \mathcal{S}(\mathbb{R}^2),
\end{equation}
has bounded Fourier transform. We did not formulate this as a statement\textup, because it is not clear a priori why does formula~\textup{\eqref{Distribution}} define a distribution. By the classical Malgrange--Ehrenpreis theorem
\begin{equation*}
\frac{1}{\big((2\pi i\xi)^k -
\tau (2\pi i\eta)^l\big)\big((-2\pi i\xi)^k - \overline{\sigma} (-2\pi i\eta)^l\big)}
\end{equation*}
 is a distribution belonging to~$\mathfrak{D}'(\mathbb{R}^2)$ \textup(and even to~$\mathcal{S}'(\mathbb{R}^2)$\textup, see~\textup{\cite{H}}\textup)\textup, however\textup, it is not clear whether it can be multiplied by a non-smooth function~$|\eta|^{l-1}$.
\end{Rem}

\subsection{Counterexamples}\label{SsCounterexamples}
Assume that~$\BE(k,l,\alpha,\beta,\sigma,\tau)$ holds. In this case, we know that~$\alpha$ and~$\beta$ must satisfy equation~\eqref{line}. In this subsection, we construct a pair of functions~$f$ and~$g$ (depending on the parameters) that will give further restrictions on the numbers~$k,l,\alpha,\beta,\sigma,\tau$. We also suppose that if at least one of the numbers~$k$ and~$l$ is even, then~$l$ is even.

Let~$\varphi$,~$0 \leq \varphi \leq 1$, be an infinitely differentiable compactly supported function on~$\mathbb{R}$ equal to~$1$ near zero.
We put~$\psi(\xi,\eta) = \varphi\big(\sqrt{\xi^{2k}+\eta^{2l}}\big)$ and~$\psi_t(\xi,\eta) = \psi(\frac{\xi}{t^l},\frac{\eta}{t^k})$, where~$t > 0$
should be thought of as a large number. Next, let~$V = \psi_t - \psi$, and let~$h = \check{V} \in \mathcal{S}(\mathbb{R}^2)$. 
Then the~$L_1(\mathbb{R}^2)$-norm of~$h$ is dominated by a constant independent of~$t$. At the same time,~$\hat{h} = V$ is equal 
to~$0$ near the origin and is equal to~$1$ on a large set if~$t$ is large. Note also that~$V(\xi,\eta) = v(\sqrt{\xi^{2k} + \eta^{2l}})$ for
some~$v \in \mathfrak{D}(\mathbb{R})$.

Now, we find two functions~$F,G \in \mathcal{S}(\mathbb{R}^2)$ from the equations
\begin{equation*}
(\partial_1^k - \tau\partial_2^l)F = (\partial_1^k - \sigma\partial_2^l)G = h.
\end{equation*}
These equations are easily solvable after passage to Fourier transforms:
\begin{equation*}
\hat{F}(\xi,\eta) = \frac{V(\xi,\eta)}{(2\pi i \xi)^k - \tau(2\pi i \eta)^l},\quad \hat{G}(\xi,\eta) = \frac{V(\xi,\eta)}{(2\pi i \xi)^k - \sigma(2\pi i \eta)^l},
\end{equation*}
and it is clear that the solutions are in the Schwartz class indeed (recall that the polynomials in the denominators do not have zeros except~$0$).

Now, inequality~\eqref{BilinearEmbeddings} with these~$F$ and~$G$ yields
\begin{equation*}
\bigg|\int\limits_{-\infty}^{+\infty}\int\limits_{-\infty}^{+\infty}\frac{|V(\xi,\eta)|^2|\xi|^{2\alpha}|\eta|^{2\beta}\,d\xi d\eta}{\big((2\pi i \xi)^k - \tau(2\pi i \eta)^l\big)\big((2\pi i \xi)^k - \sigma(2\pi i \eta)^l\big)}\bigg| \lesssim 1
\end{equation*}
independently of~$t$. We transform the integral as we did it before, 
including the change of variables~$\xi = \rho|\eta|^{\frac{l}{k}}$, to obtain
\begin{equation*}
\bigg|\int\limits_{0}^{+\infty}|\eta|^{-1}\int\limits_{-\infty}^{+\infty} \frac{V\big(\rho|\eta|^{\frac{l}{k}},\eta\big)^2|\rho|^{2\alpha}}{\big(\rho^k - \tau_1\big)\big(\rho^k - \sigma_1\big)} \, d\rho d\eta\bigg| \lesssim 1.
\end{equation*}
The power~$|\eta|^{-1}$ in the outer integral arises after a short calculation involving equation~\eqref{line}, and the signs of~$\eta$ in the denominator disappear, if~$l$ is even (if both~$k$ and~$l$ are odd, then one has twice the same integral after splitting the integral with respect to~$\eta$ into integrals over~$(-\infty,0]$ and~$[0,\infty)$ and making the change of variable~$\rho \to -\rho$ in the first one). Changing the order of integration, we arrive at 
\begin{equation*}
\int\limits_{-\infty}^{+\infty}\bigg[\frac{|\rho|^{2\alpha}}{\big(\rho^k - \tau_1\big)\big(\rho^k - \sigma_1\big)}\int\limits_{0}^{+\infty}\frac{v^2\big(\sqrt{\rho^{2k}\eta^{2l} +\eta^{2l}}\big)}{\eta}\,d\eta\bigg]d\rho.
\end{equation*} 

An obvious change of variable shows that the integral with respect to~$\eta$ is equal to~$\int_{0}^{\infty}\frac{v^2(u^l)}{u}\,du$, which does not depend
on~$\rho$ and can be made arbitrarily large if~$t$ is large. So,~$\BE(k,l,\alpha,\beta,\sigma,\tau)$ can only be fulfilled if
\begin{equation}\label{a3}
\int\limits_{-\infty}^{\infty}\frac{|\rho|^{2\alpha}\,d\rho}{\big(\rho^k - \tau_1\big)\big(\rho^k - \sigma_1\big)} = 0.
\end{equation}

\subsubsection{Cases where~$k$ is even (both numbers~$k$ and~$l$ are even)}
In this case the integral~\eqref{a3} can be rewritten as
\begin{equation*}
2\int\limits_{0}^{\infty}\frac{|\rho|^{2\alpha}\,d\rho}{\big(\rho^k - \tau_1\big)\big(\rho^k - \sigma_1\big)} = \frac{2}{k}\int\limits_{0}^{\infty}\frac{|\rho|^{\frac{2\alpha-k+1}{k}}\,d\rho}{\big(\rho - \tau_1\big)\big(\rho - \sigma_1\big)}.
\end{equation*}
Both numbers~$k$ and~$l$ are even, so, the situation is symmetric and we may assume that~$\alpha \leq \frac{k-1}{2}$. The case~$\alpha = \frac{k-1}{2}$ differs a bit from the other cases.

\paragraph{Case~$\alpha = \frac{k-1}{2}$ and~$\sigma_1 \ne \tau_1$.} In this case, the integral in question can be computed directly:
\begin{equation*}
\int\limits_{0}^{\infty}\frac{\,d\rho}{\big(\rho - \tau_1\big)\big(\rho - \sigma_1\big)}= \frac{\log (-\sigma_1) - \log(-\tau_1)}{\sigma_1 - \tau_1} \ne 0,
\end{equation*}
provided~$\sigma_1 \ne \tau_1$. Here~$\log$ is the branch of logarithm defined for~$\arg z \in [0,2\pi)$ such that it is real when~$\arg z = 0$. 
\paragraph{Case~$\alpha = \frac{k-1}{2}$ and~$\sigma_1 = \tau_1$.} If~$\sigma_1 = \tau_1$, then the integral~\eqref{a3} equals~$\tau_1^{-1}$, which is non-zero too.
\paragraph{Case~$\alpha < \frac{k-1}{2}$,~$\sigma_1 \ne \tau_1$.} In this case, the integral can be rewritten as
\begin{equation*}
\int\limits_{0}^{\infty}\frac{|\rho|^{\frac{2\alpha-k+1}{k}}\,d\rho}{\big(\rho - \tau_1\big)\big(\rho - \sigma_1\big)} = \frac{1}{\sigma_1 - \tau_1}\bigg(\int\limits_{0}^{\infty}\frac{|\rho|^{\frac{2\alpha-k+1}{k}}\,d\rho}{\rho - \sigma_1} - \int\limits_{0}^{\infty}\frac{|\rho|^{\frac{2\alpha-k+1}{k}}\,d\rho}{\rho - \tau_1}\bigg).
\end{equation*}
These integrals converge absolutely, because we have assumed that~$\frac{2\alpha -k + 1}{k} < 0$.
Thus, we have to prove that the function~$\Phi$ given by the formula
\begin{equation*}
\Phi(\zeta) = \int\limits_{0}^{\infty}\frac{|\rho|^{\frac{2\alpha-k+1}{k}}\,d\rho}{\rho - \zeta}
\end{equation*}
is an injective function on~$\mathbb{C}\setminus\mathbb{R}_+$. We list the properties of the function~$\Phi$.
\begin{enumerate}
\item The function~$\Phi$ is holomorphic as a function on~$\mathbb{C}\setminus\mathbb{R}_+$.
\item The function~$\Phi$ is non-zero, e.g. it is positive on~$\mathbb{R}_-$.
\item For any~$\lambda \in \mathbb{R}_+$ one has~$\Phi(\lambda\zeta) = \lambda^{\frac{2\alpha - k+ 1}{k}}\Phi(\zeta)$.
\end{enumerate} 
The third property follows by a change of variable in the integral that defines~$\Phi$. Let~$\Delta(z)$ be the branch of~$z \mapsto z^{\frac{2\alpha -k+1}{k}}$ defined on~$\mathbb{C}\setminus\mathbb{R}_+$ that is real positive on the negative real  half-line. By the third property of the function~$\Phi$,~$\Phi(\zeta) = \Phi(-1)\Delta(\zeta)$ when~$\zeta$ is a negative real. By the uniqueness theorem for analytic functions,~$\Phi(\zeta) = \Phi(-1)\Delta(\zeta)$ for~$\zeta \in \mathbb{C} \setminus\mathbb{R}_+$. But the function~$\Delta$ is injective, because~$\frac{2\alpha -k + 1}{k} > -1$ due to equation~\eqref{line}. 
\paragraph{Case~$\alpha < \frac{k-1}{2}$,~$\sigma_1 = \tau_1$.} To treat the remaining case, we note that the integral~\eqref{a3} in question in this case equals~$\Phi'(\sigma_1)$ (where~$\Phi$ is the holomorphic function introduced above). This value is non-zero, because a holomorphic injective function cannot have vanishing derivative. 
\subsubsection{Cases where~$k$ is odd}
In these cases we have~$|\rho|^{k-1} = \rho^{k-1}$,
and~\eqref{a3} can be rewritten as follows ($s = \rho^k$):
\begin{equation*}
\int\limits_{-\infty}^{\infty} \frac{|\rho|^{2\alpha}}{\big(\rho^k - \tau_1\big)\big(\rho^k - \sigma_1\big)}\,d\rho=
\frac{1}{k}\int\limits_{-\infty}^{\infty} \frac{|s|^{\frac{2\alpha - k + 1}{k}}}{(s-\tau_1)(s - \sigma_1)}\,ds.
\end{equation*}
\paragraph{Case~$\alpha = \frac{k-1}{2}$,~$\sigma_1$ and $\tau_1$ have imaginary parts of different signs.}
In this case the modulus disappears, we are integrating an analytic function. Integrating over the same contour as we did in Subsection~\ref{SsOddk}, we see that the integral equals zero if~$\sigma_1$ and~$\tau_1$ have imaginary parts of the same sign, and does not equal zero if they do not.
\paragraph{Case~$\alpha \ne \frac{k-1}{2}$,~$\sigma_1 \ne \tau_1$.}
We do the same trick as before and rewrite the integral as
\begin{equation*}
\frac{1}{\tau_1 - \sigma_1}\lim\limits_{R \to +\infty}\int\limits_{-R}^R\Big(\frac{|s|^{\frac{2\alpha-k+1}{k}}}{s-\tau_1} - \frac{|s|^{\frac{2\alpha-k+1}{k}}}{s-\sigma_1}\Big)\, ds
\end{equation*}
and introduce the function~$\Phi$ given by formula
\begin{equation*}
\Phi(\zeta) = \lim\limits_{R \to +\infty}\int\limits_{-R}^R\frac{|s|^{\frac{2\alpha-k+1}{k}}}{s - \zeta}\,ds.
\end{equation*}
Here are the properties of the functions~$\Phi$ (note that we still have to prove that the limit in~$R$ exists).
\begin{enumerate}
\item The function~$\Phi$ is analytic in~$\mathbb{C} \setminus \mathbb{R}$ (in particular, the limit in~$R$ exists).
\item The function~$\Phi$ is non-zero, e.g.  it is non-zero on the imaginary axis (it is pure imaginary there).
\item For all admissible~$\zeta$,~$\Phi(-\zeta) = -\Phi(\zeta)$.
\item For any~$\lambda \in \mathbb{R}_+$ one has~$\Phi(\lambda\zeta) = \lambda^{\frac{2\alpha - k+ 1}{\alpha}}\Phi(\zeta)$.
\end{enumerate} 
Only the first property needs a proof (together with the fact that the function~$\Phi$ is well-defined). Writing
\begin{equation*}
\lim\limits_{R \to +\infty}\int\limits_{-R}^R\frac{|s|^{\frac{2\alpha-k+1}{k}}}{s - \zeta}\,ds = \int\limits_{0}^R s^{\frac{2\alpha - k + 1}{k}}\Big(\frac{1}{s - \zeta} - \frac{1}{s+ \zeta}\Big)\,ds = \int\limits_{0}^R \frac{2\zeta s^{\frac{2\alpha - k + 1}{k}}}{(s - \zeta)(s+ \zeta)}\,ds,
\end{equation*}
we again obtain an absolutely convergent integral, because~$\frac{2\alpha - k + 1}{k} < 1$ by virtue of equality~\eqref{line}.

We must prove that~$\Phi$ is injective on~$\mathbb{C} \setminus\mathbb{R}$. We denote by~$\Delta$
the branch of the power~$z^{\frac{2\alpha - k + 1}{k}}$ that arises if we allow the argument to vary within~$(-\pi,\pi]$ and maps~$\mathbb{R}_+
$ to itself.
By the fact that~$|\frac{2\alpha - k + 1}{k}| < 1$,~$\Delta$ is injective. Moreover, since~$\Delta$ takes the upper half-plane either into 
the upper or into the lower half-plane (depending on the sign of~$2\alpha -k +1$), we see that
\begin{equation}\label{a6}
\Delta(\zeta_1) + \Delta(\zeta_2) \ne 0
\end{equation}
whenever~$\zeta_1,\zeta_2 \in \mathbb{C}_{+}$.

Now, consider a point~$\zeta_0$ that lies inside the first quarter. If~$|\arg \lambda| < \delta$, where~$\delta$ is
sufficiently small, then~$\lambda\zeta_0 \in \mathbb{C}_+$, and the function~$\lambda \mapsto \Phi(\lambda \zeta_0)$ 
is analytic. But for~$\lambda > 0$ we have
\begin{equation*}
\Phi(\lambda\zeta_0) = \Delta(\lambda)\Phi(\zeta_0)
\end{equation*}
by the fourth property of~$\Phi$,
whence~$\Phi(\lambda \zeta_0) = C\Delta(\lambda \zeta_0)$ for~$|\arg \lambda| < \delta$. 
By the uniqueness theorem,~$\Phi(\zeta) = C\Delta(\zeta)$ for all~$\zeta \in \mathbb{C}_{+}$. Clearly,~$C \ne 0$ because~$\Phi$ is nonzero on
the imaginary axis.

Now, we see that the restrictions~$\Phi\mid_{\mathbb{C}_+}$ and~$\Phi\mid_{\mathbb{C}_-}$ are injective.
If~$\zeta_1 \in \mathbb{C}_+$ and~$\zeta_2 \in \mathbb{C}_-$, then~$\Phi(\zeta_1) \ne \Phi(\zeta_2)$
by~\eqref{a6} and the fact that~$\Phi$ is odd.

\paragraph{Case~$\alpha \ne \frac{k-1}{2}$,~$\sigma_1 = \tau_1$.} As in the case of even parameters, the integral in question equals~$\Phi'(\sigma_1)$. This value is non-zero, because it is a derivative of an injective analytic mapping.

\section{Non-elliptic case}\label{SVanish}
\subsection{Proof of Theorem~\ref{TVanishing}}
It is more convenient to work with compactly supported~$f$ and~$g$. The following claim is a straightforward consequence of the fact that~$\mathfrak{D}$ is dense in~$\mathcal{S}$.
\begin{Fact}\label{Compact support}
Suppose that inequality~\textup{\eqref{BilinearEmbeddings}} holds for some~$k,l,\alpha,\beta,\sigma,\tau$ with any~$f$ and~$g$ that belong to~$\mathfrak{D}(\mathbb{R}^2)$. Then~$\BE(k,l,\alpha,\beta,\sigma,\tau)$ also holds.
\end{Fact}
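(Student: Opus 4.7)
The proof proceeds by a standard cutoff approximation. Given Schwartz functions $f, g$, I would construct compactly supported smooth $f_n, g_n$ for which both sides of~\eqref{BilinearEmbeddings} converge to the corresponding quantities for $f, g$. Fix $\chi \in \mathfrak{D}(\mathbb{R}^2)$ with $\chi \equiv 1$ on the unit ball, set $\chi_n(x) = \chi(x/n)$, and take $f_n = \chi_n f$, $g_n = \chi_n g$; these belong to $\mathfrak{D}(\mathbb{R}^2)$.

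For the right-hand side of~\eqref{BilinearEmbeddings}, the Leibniz rule expands $(\partial_1^k - \tau\partial_2^l)f_n$ as the main term $\chi_n \cdot (\partial_1^k - \tau\partial_2^l)f$ plus a finite sum of terms each containing at least one derivative of $\chi_n$. The main term tends to $(\partial_1^k - \tau\partial_2^l)f$ in $L_1(\mathbb{R}^2)$ by dominated convergence (the dominant being the $L_1$-function $(\partial_1^k - \tau\partial_2^l)f$ itself). Every other term carries a factor $n^{-j}$ with $j \ge 1$ and is supported in an annulus of diameter $\sim n$, while being multiplied by derivatives of the Schwartz function $f$; the rapid decay of $f$ easily defeats the growth of the support, so these error terms tend to~$0$ in $L_1$. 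The analogous statement holds for $g_n$ with $\sigma$ in place of $\tau$.

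For the left-hand side, I would argue via the Fourier picture. The multiplication operator $\varphi \mapsto \chi_n \varphi$ is continuous on $\mathcal{S}(\mathbb{R}^2)$, and $\chi_n f \to f$ in the Schwartz topology; passing to Fourier transforms, $\hat f_n \to \hat f$ and $\hat g_n \to \hat g$ in $\mathcal{S}(\mathbb{R}^2)$, with all Schwartz semi-norms bounded uniformly in $n$. Since $\alpha,\beta \geq 0$, the weight $|\xi|^{2\alpha}|\eta|^{2\beta}$ in~\eqref{norma} is locally bounded and has polynomial growth, so the integrand is dominated uniformly in $n$ by a fixed Schwartz-type majorant. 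Dominated convergence gives $\langle f_n, g_n\rangle_{W_2^{\alpha,\beta}} \to \langle f, g\rangle_{W_2^{\alpha,\beta}}$. Applying the assumed inequality to each pair $(f_n, g_n)$ and letting $n \to \infty$ yields~\eqref{BilinearEmbeddings} for $(f,g)$.

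No serious obstacle arises; the only point deserving attention is that the weight in~\eqref{norma} is singular on the axes $\{\xi=0\}\cup\{\eta=0\}$ when $\alpha$ or $\beta$ is not a non-negative integer, but the singularity is locally integrable because $\alpha,\beta \ge 0$, and thus compatible with the dominated-convergence argument.
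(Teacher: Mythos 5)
Your argument is correct and is exactly the routine verification behind the paper's one-line justification (density of $\mathfrak{D}(\mathbb{R}^2)$ in $\mathcal{S}(\mathbb{R}^2)$ plus continuity of both sides of~\eqref{BilinearEmbeddings} in the Schwartz topology); the cutoff $f_n=\chi_n f$, the Leibniz-rule control of the $L_1$ norms, and the dominated-convergence passage on the Fourier side all go through as you describe. The only quibble is your closing remark: for $\alpha,\beta\ge 0$ the weight $|\xi|^{2\alpha}|\eta|^{2\beta}$ is actually continuous (hence locally bounded) on the axes, so there is no singularity to worry about at all.
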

So, when proving Theorem~\ref{TVanishing}, we may assume that~$f$ and~$g$ are compactly supported. Moreover, using rescaling, we may make their support lie inside a unit disc centered at zero.

By symmetry, we may assume that~$k$ is odd. We remind the reader that~$\BE(k,l,\frac{k-1}{2},\frac{l-1}{2},\sigma,\tau)$ follows from the uniform boundedness of the kernel given by formula~\eqref{integrand}. However, in the case of real~$\sigma_1$ and~$\tau_1$, the situation is more complicated and the set~$\Omega_{\eps,R}$ is defined as
\begin{equation*}
\Omega_{\eps,R} = \Big\{(\xi,\eta)\in\mathbb{R}^2\big|\,\,\eps < |\eta| < R,\,\,|\xi^k - \sigma_1\eta^l| > \eps g(|\eta|),\,\,|\xi^k - \tau_1\eta^l| > \eps g(|\eta|)\Big\},
\end{equation*}
where~$g$ is some positive function decaying rapidly at zero and infinity. We note that by our assumption about the supports of~$f$ and~$g$ we may assume that~$|a| < 2$ and~$|b| < 2$, where~$a = x_1 - x_2$ and~$b = y_1 - y_2$. We may assume that~$a > 0$ and~$\eps < 1$.

We make our traditional change of variable~$\xi = \rho |\eta|^{\frac lk}$:
\begin{equation*}
\int\limits_{\Omega_{\eps,R}}\frac{\xi^{k-1}
e^{2\pi i (a\xi + b\eta)}\, d\xi}{(\xi^k - \tau_1 \eta^l)(\xi^k -
\sigma_1 \eta^l)}  = \frac{1}{(2\pi)^{2k}}\int\limits_{\varepsilon\leqslant |\eta|\leqslant R}
|\eta|^{-1}\int\limits_{B_{\eps}(\eta)}\frac{\rho^{k-1}e^{2\pi i(a\rho|\eta|^{l/k}+b\eta)}}{\big(\rho^k -
\tau_1(\sign\eta)^l\big)\big(\rho^k -\sigma_1(\sign\eta)^l\big)}\,d\rho d\eta.
\end{equation*}  
Here~$B_{\eps}(\eta)$ is given by
\begin{equation*}
B_{\eps}(\eta) = \big\{\rho \in \mathbb{R}\,\big|\,\,\dist(\rho^k,\{\sigma_1(\sign \eta)^l,\tau_1(\sign \eta)^l\}) >  \eps |\eta|^{-l}g(|\eta|)\big\}.
\end{equation*}
We take~$(2\pi a)^{\frac{k}{l}}\eta$ to be a new variable, redefine~$R$ and~$b$ (but we still carry~$\eps$ and~$a$ and do not change them; it will be convenient to use the restrictions~$\eps$ < 1 and~$a < 2$) and rewrite the integral as
\begin{equation}\label{2Int}
\int\limits_{(2\pi a)^{-\frac{k}{l}}\varepsilon\leqslant |\eta|\leqslant R}
|\eta|^{-1}\int\limits_{B_{\eps,a}(\eta)}\frac{\rho^{k-1}e^{ i(\rho|\eta|^{l/k}+b\eta)}}{\big(\rho^k -
\tau_1(\sign\eta)^l\big)\big(\rho^k -\sigma_1(\sign\eta)^l\big)}\,d\rho d\eta,
\end{equation}
where
\begin{equation*}
B_{\eps,a}(\eta) = \big\{\rho \in \mathbb{R}\,\big|\,\,\dist(\rho^k,\{\sigma_1(\sign \eta)^l,\tau_1(\sign \eta)^l\}) >  \eps |(2\pi a)^{-\frac{k}{l}}\eta|^{-l}g(|(2\pi a)^{-\frac{k}{l}}\eta|)\big\}.
\end{equation*}
The integral with respect to~$\rho$ can be represented as a contour integral (after rewriting~$\int_{\mathbb{R}} = \lim_{r \to \infty}\int_{-r}^r$), see Figure~\ref{fig:con}.
\begin{figure}[h]
\begin{center}
\includegraphics[width = 1 \linewidth]{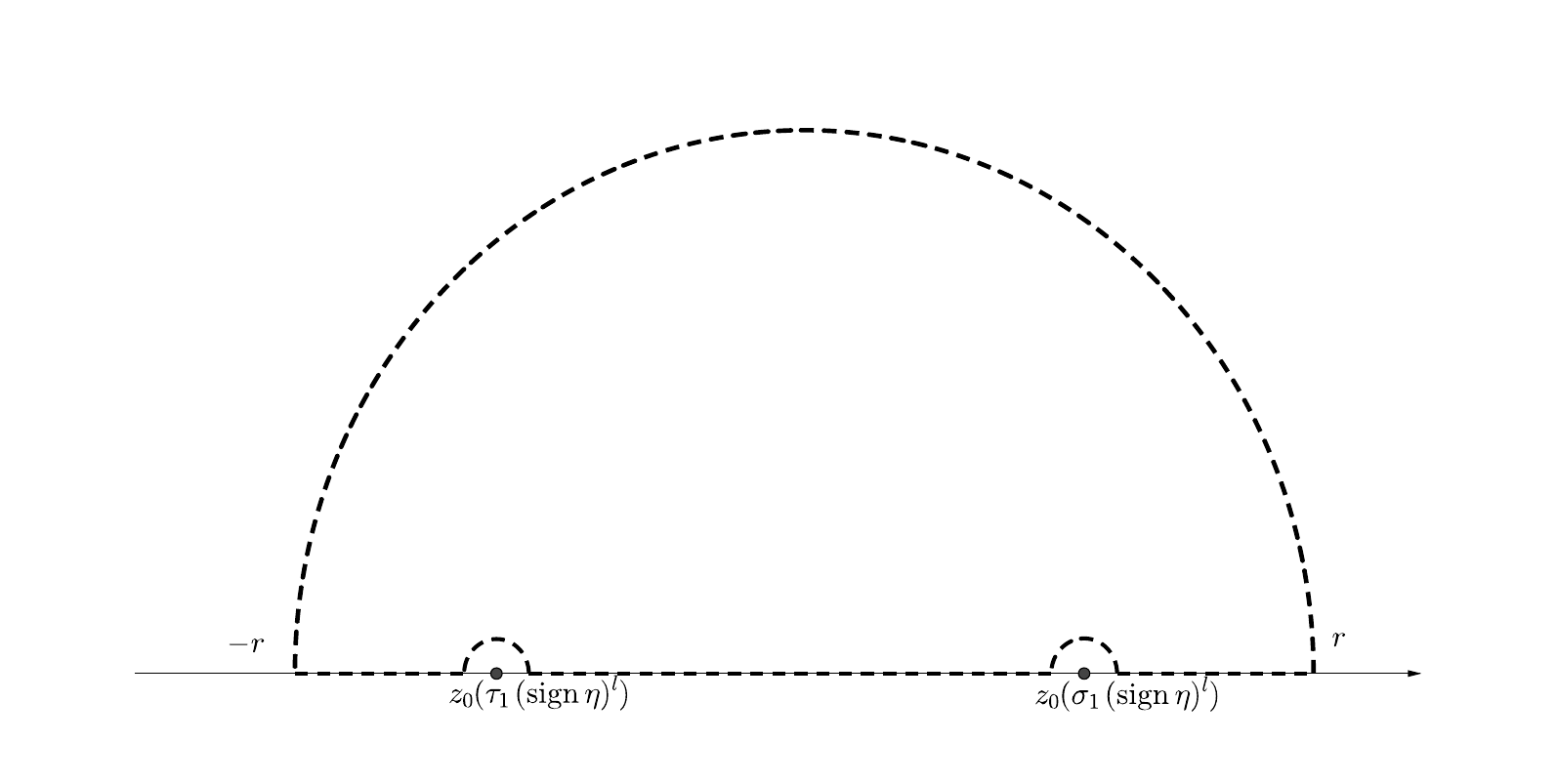}
\caption{Contour of integration.}
\label{fig:con}
\end{center}
\end{figure} 
We denote a unique real root of the equation~$z^k = s$,~$s \in \mathbb{R}$, by~$z_0(s)$. It is not hard to see that this equation has exactly~$\frac{k-1}{2}$ roots in the upper half-plane~$\Im z > 0$. So, if~$r$ is sufficiently large, there are exactly~$k-1$ poles of the integrand inside the contour ($\frac{k-1}{2}$ matching~$s = \sigma_1(\sign \eta)^l$ and~$\frac{k-1}{2}$ matching~$s = \tau_1 (\sign \eta)^l$). We also take the function~$g(\eta)$ to be so small that the small semicircles do not intersect ($g(\eta) \lesssim \eta^l$ will do because~$\eps < 1$). We need a simple lemma on estimating the difference between the integral over a small semicircle and the ``semi-residue''; this is a quantitative version of the Sokhotski--Plemelj formula. 
\begin{Le}\label{error}
If a meromorphic function~$\psi$ has a simple pole at a point~$x_0$ on the real line\textup, then
\begin{equation*}
\Bigg|\int\limits_{\substack{|z-x_0|=r \\ \Im z >0}}\psi(z)\,dz +\pi i\Res_{x_0}\psi\Bigg|\,
\leqslant\pi r\max_{\substack{|z - x_0|\leqslant r,\\\Im z >0}}|h'|,
\end{equation*}  
where~$h(z)=(z-x_0)\psi(z)$ \textup{(}the semicircle of integration is oriented clockwise\textup{)}.
\end{Le}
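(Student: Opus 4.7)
The plan is to reduce the estimate to a direct parametrization of the semicircle, exploiting the fact that the simple pole factors out cleanly. Since $\psi$ has a simple pole at $x_0$, the function $h(z)=(z-x_0)\psi(z)$ extends holomorphically across $x_0$, and by definition $h(x_0)=\Res_{x_0}\psi$. Writing $\psi(z)=h(z)/(z-x_0)$ turns the contour integral into an integral of a holomorphic function, which is what makes the quantitative estimate possible.

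First I would parametrize the semicircle, traversed clockwise, by $z(\theta)=x_0+re^{i\theta}$ with $\theta$ running from $\pi$ down to $0$, so that $dz=ire^{i\theta}\,d\theta$. A direct computation then gives
\begin{equation*}
\int\limits_{\substack{|z-x_0|=r\\ \Im z>0}}\psi(z)\,dz=\int_{\pi}^{0}\frac{h(x_0+re^{i\theta})}{re^{i\theta}}\,ire^{i\theta}\,d\theta=-i\int_{0}^{\pi}h(x_0+re^{i\theta})\,d\theta.
\end{equation*}
Using $\Res_{x_0}\psi=h(x_0)$ and the trivial identity $\pi i h(x_0)=i\int_0^{\pi}h(x_0)\,d\theta$, the quantity whose modulus we wish to bound becomes
\begin{equation*}
\int\psi\,dz+\pi i\Res_{x_0}\psi=-i\int_{0}^{\pi}\bigl[h(x_0+re^{i\theta})-h(x_0)\bigr]\,d\theta.
\end{equation*}

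The last step is to estimate the integrand pointwise by integrating $h'$ along the radial segment from $x_0$ to $x_0+re^{i\theta}$, which stays inside the closed upper half-disc $\{|z-x_0|\le r,\,\Im z\ge 0\}$. For each $\theta\in[0,\pi]$,
\begin{equation*}
\bigl|h(x_0+re^{i\theta})-h(x_0)\bigr|=\Bigl|\int_{0}^{r}h'(x_0+te^{i\theta})e^{i\theta}\,dt\Bigr|\le r\max_{\substack{|z-x_0|\le r\\ \Im z\ge 0}}|h'|,
\end{equation*}
and integrating in $\theta$ over $[0,\pi]$ yields the claimed factor $\pi r$. There is no real obstacle in this argument; the only point that requires attention is the orientation, which must be clockwise in order to produce $+\pi i\Res_{x_0}\psi$ rather than $-\pi i\Res_{x_0}\psi$ (this is consistent with the orientation of the semicircle appearing in the contour of Figure~\ref{fig:con}, where it is traversed in the direction opposite to the outer arc).
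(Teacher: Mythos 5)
Your proof is correct and follows essentially the same route as the paper's: both isolate the principal part $h(x_0)/(z-x_0)$, whose clockwise semicircle integral is exactly $-\pi i\,\Res_{x_0}\psi$, and bound the remainder using $|h(z)-h(x_0)|\le |z-x_0|\max|h'|$ together with the length $\pi r$ of the arc. Your explicit parametrization is only a cosmetic variant of the paper's ML-estimate of $\psi_1(z)=\bigl(h(z)-h(x_0)\bigr)/(z-x_0)$.
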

\begin{proof}
Since~$\psi$ has simple pole at~$x_0$, it can be written in the form~$\psi(z)=\frac{c}{z - x_0}+\psi_1(z)$, where $\psi_1$ is regular near $x_0$. Accordingly, we write
\begin{equation*}
\int\limits_{\substack{|z-x_0|=r \\ \Im z >0}}\!\!\!\!\!\!\!\psi(z)\,dz=\int\limits_{\substack{|z-x_0|=r \\ \Im z >0}}\!\!\!\!\!\frac{c\,dz}{z-x_0}+
\int\limits_{\substack{|z-x_0|=r \\ \Im z >0}}\!\!\!\!\!\psi_1(z)\,dz.
\end{equation*}
Since $\psi_1(z)=\frac{h(z)-h(x_0)}{z-x_0}$, the second integral is estimated as follows:
\begin{equation*}
\Bigg|\int\limits_{\substack{|z-x_0|=r \\ \Im z>0}}\psi_1(z)\,dz\Bigg|
\leqslant\pi r\max_{\substack{|z - x_0|\leqslant r,\\ \Im z >0}}|\psi_1|
\leqslant\pi r\max_{\substack{|z - x_0|\leqslant r,\\ \Im z >0}}|h'|.
\end{equation*}
The first integral can be calculated with ease:
\begin{equation*}
\int\limits_{\substack{|z - x_0| = r \\ \Im z > 0}} \frac{c\,dz}{z - x_0} =
\int\limits_{\substack{|z| = r,\\ \Im z > 0}} \frac{c\,dz}{z} =
\int\limits_{\frac{1}{2}}^0 \frac{c\,d(re^{2\pi i \theta})}{re^{2\pi i \theta}} =
-\pi i c.
\end{equation*}
\end{proof}
Therefore, the sum of the integrals over the semicircles is
\begin{equation*}
-\pi i e^{ib\eta}\frac{e^{ i z_0(\sigma_1 (\sign\eta)^{l})|\eta|^{\frac lk}} - e^{ i z_0(\tau_1 (\sign\eta)^{l})|\eta|^{\frac lk}}}{k(\sigma_1 - \tau_1)} + \eps O\bigg( \max(1,|\eta|^{\frac{l}{k}})|(2\pi a)^{-\frac{k}{l}}\eta|^{-l}g(|(2\pi a)^{-\frac{k}{l}}\eta|)\bigg).
\end{equation*}
The sum of residues result in a sum of integrals of the type~\eqref{SimpleIntegral}, as we have seen, such integrals are uniformly bounded. However, the integral that comes from semi-residues is subtler. We begin with estimating the error (i.e. the part of the integral~\eqref{2Int} that comes from the~$O$):
\begin{equation*}
\begin{aligned}
\eps\int\limits_{-\infty}^{\infty}\max(1,|\eta|^{\frac{l}{k}})|(2\pi a)^{-\frac{k}{l}}\eta|^{-l}g\big(|(2\pi a)^{-\frac{k}{l}}\eta|\big) \,\frac{d\eta}{|\eta|} = 
\eps\int\limits_{-\infty}^{\infty}\max\big(1,2\pi a|\eta|^{\frac{l}{k}}\big)|\eta|^{-l}g(|\eta|) \,\frac{d\eta}{|\eta|} \stackrel{\scriptscriptstyle a < 2}{\leq}\\
\eps \int\limits_{-\infty}^{\infty}\big(1 + |\eta|^{\frac{l}{k}}\big)|\eta|^{-l}g(|\eta|) \,\frac{d\eta}{|\eta|} \leq \eps
\end{aligned}
\end{equation*}
provided~$g(z) = |z|^{2l + 1} e^{-|z|}$ (we also multiply this function by a small constant to fulfill~$g(|\eta|)\lesssim\eta^l$). So, the contribution of the error to~\eqref{2Int} is uniformly bounded (and even small if~$\eps$ is small). The integral coming from the semi-residues looks like this (we have redenoted~$\eps$):
\begin{equation*}
\int\limits_{\eps}^{R} e^{i b \eta}\frac{\big(e^{i c_1|\eta|^{\frac lk}} - e^{i c_2|\eta|^{\frac lk}}\big)\,d\eta}{\eta},
\end{equation*}
here~$c_1$ and~$c_2$ are some real constants (equal to~$z_0(\sigma_1 (\sign\eta)^{l})$ and~$z_0(\tau_1 (\sign\eta)^{l})$ correspondingly).
The part of the integral over the interval~$[0,1]$ is bounded, so we can drop it. On the ray~$(1,\infty)$, we use triangle inequality
\begin{equation*}
\bigg|\int\limits_{1}^{R} e^{i b \eta}\frac{\big(e^{i c_1|\eta|^{\frac lk}} - e^{i c_2|\eta|^{\frac lk}}\big)\,d\eta}{\eta}\bigg| \leq \bigg|\int\limits_1^R e^{i (b\eta +c_1|\eta|^{\frac lk})}\,\frac{d\eta}{\eta}\bigg| + \bigg|\int\limits_1^R e^{i (b\eta + c_2|\eta|^{\frac lk})}\,\frac{d\eta}{\eta}\bigg|
\end{equation*}
and make a change of variable~$|c_{1,2}|^{\frac kl}\eta = e^x$, redefine~$b$, and arrive at
\begin{equation*}
\Big|\int\limits_0^{e^R} e^{i (b e^x \pm e^{\alpha x})} \,dx\Big|,
\end{equation*}
where~$\alpha = \frac{l}{k} \ne 1$.
It remains to prove a simple lemma on oscillatory integrals.
\begin{Le}\label{OscillatoryIntegral}
Let~$\alpha$\textup,~$\alpha \ne 1$\textup, be a positive fixed parameter. Then
\begin{equation*}
\Big|\int\limits_0^{R} e^{i (b e^x \pm e^{\alpha x})}\,dx\Big| \lesssim 1.
\end{equation*}
\end{Le}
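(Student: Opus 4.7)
The plan is to simplify the oscillation by substituting $u = e^x$; this transforms the integral into
\begin{equation*}
\int_1^{e^R} e^{i\psi(u)}\,\frac{du}{u}, \qquad \psi(u) = bu \pm u^\alpha.
\end{equation*}
The gain from this change of variables is that the second derivative $\psi''(u) = \pm\alpha(\alpha-1)u^{\alpha-2}$ has constant sign and does not vanish on $(0,\infty)$ when $\alpha \ne 1$; moreover its size depends only on $\alpha$ and $u$, not on $b$. Applying an oscillatory estimate directly to the original phase $\phi(x) = be^x \pm e^{\alpha x}$ would be problematic, since $\phi''(x) = be^x \pm \alpha^2 e^{\alpha x}$ can vanish for certain real $b$; the substitution pushes all the $b$-dependence into the first derivative.

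Next, I would decompose $[1, e^R]$ into dyadic pieces $I_k = [2^k, 2^{k+1}] \cap [1, e^R]$, $k = 0, 1, 2, \ldots$, and apply the second-derivative van der Corput lemma with monotone amplitude on each $I_k$. On $I_k$ one has $|\psi''(u)| \asymp 2^{k(\alpha-2)}$, while the weight $1/u$ is monotone with both supremum and total variation of order $2^{-k}$. The standard weighted estimate then gives
\begin{equation*}
\left|\int_{I_k} e^{i\psi(u)}\,\frac{du}{u}\right| \lesssim \bigl(2^{k(\alpha-2)}\bigr)^{-1/2}\cdot 2^{-k} = 2^{-k\alpha/2}.
\end{equation*}

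Finally, summing over $k \geq 0$ yields a convergent geometric series (because $\alpha > 0$), producing a bound uniform in $R$ and $b$. The only step requiring care is the weighted van der Corput estimate on a single $I_k$; this can equivalently be derived by integrating by parts once to absorb the $1/u$ factor into $1/\psi'$ and then applying the unweighted second-derivative bound to the remaining integral. Either way, this is a standard oscillatory-integral manipulation, so the real content of the lemma is the initial substitution that converts a phase with a possibly degenerate second derivative into one whose second derivative is controlled uniformly in the parameter $b$.
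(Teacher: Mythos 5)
Your argument is correct, but it proceeds differently from the paper. The paper stays with the variable $x$ and the phase $h_b(x)=be^x\pm e^{\alpha x}$, whose second derivative $be^x\pm\alpha^2e^{\alpha x}$ may vanish; it then shows that $h_b''$ changes monotonicity at most once on a ray $[C,\infty)$, so the exceptional set $\{x\ge C: |h_b''(x)|<1\}$ is a union of at most two intervals of total length at most $2$ (via the observation that $|h_b''(z)|<1$ forces $|h_b''(z\pm1)|>1$), and applies the unweighted second-derivative van der Corput bound on the at most three complementary intervals, estimating the rest trivially by its measure. You instead substitute $u=e^x$, which trades the degenerate phase for the phase $\psi(u)=bu\pm u^{\alpha}$ with $\psi''(u)=\pm\alpha(\alpha-1)u^{\alpha-2}$ nonvanishing, of constant sign, and independent of $b$, at the price of the weight $1/u$; a dyadic decomposition with the weighted van der Corput estimate then gives the summable bound $2^{-k\alpha/2}$ on $[2^k,2^{k+1}]$ (using $\alpha>0$ and $\alpha\ne1$). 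Both proofs yield constants depending only on $\alpha$. Your route is the more systematic one: it localizes all the $b$-dependence in the first derivative, avoids the ad hoc analysis of where the second derivative is small, and even produces quantitative decay across dyadic blocks; the paper's route avoids introducing a weight and needs only the plain second-derivative test, at the cost of the slightly fiddly bookkeeping of the exceptional set. The one step you should make explicit is the weighted van der Corput inequality
\begin{equation*}
\Big|\int_a^b e^{iF(u)}\phi(u)\,du\Big|\lesssim \big(\min_{[a,b]}|F''|\big)^{-1/2}\Big(\sup_{[a,b]}|\phi|+\var_{[a,b]}\phi\Big),
\end{equation*}
which, as you note, follows from the unweighted version by a single integration by parts; with that in place the proof is complete.
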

This lemma will be deduced from the second Van-der-Corput lemma (for example, see~\cite{HE},~$\S 2.5.2$) cited below.
\begin{Le}\label{VdC}
Suppose that the function~$F$ is twice differentiable on~$(a,b)$ and~$F''$ does not have roots on this interval. In this case\textup,
\begin{equation*}
\Big|\int\limits_a^b e^{iF(x)}\,dx \Big| \leq \frac{8 \sqrt{\pi}}{\sqrt{\min\limits_{\xi \in (a,b)} |F''(\xi)|}}.
\end{equation*}
\end{Le}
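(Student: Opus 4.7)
The plan is to follow the classical proof of the second Van der Corput lemma, splitting the interval $(a,b)$ according to the size of $|F'|$: where $F'$ is small the integral is controlled trivially by the length of that region, while where $F'$ is large integration by parts produces a bound in terms of $1/|F'|$.

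First I would reduce to the case $F''>0$. Since $F''$ satisfies the intermediate value property (Darboux's theorem), the hypothesis that $F''$ has no roots forces it to have constant sign on $(a,b)$; replacing $F$ by $-F$ leaves both $|e^{iF}|$ and $|F''|$ unchanged, so we may assume $F''>0$. Setting $\lambda = \min_{(a,b)}|F''|$, this gives that $F'$ is strictly increasing with $F'(y) - F'(x) \geq \lambda(y-x)$ whenever $a < x < y < b$.

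For a parameter $\delta > 0$ to be optimized later, I would introduce the sublevel set $E_\delta = \{x \in (a,b) : |F'(x)| \leq \delta\}$. Monotonicity of $F'$ makes $E_\delta$ an interval, and the growth lower bound on $F'$ gives $|E_\delta| \leq 2\delta/\lambda$, so the trivial estimate contributes at most $2\delta/\lambda$. On each of the (at most two) components $J$ of $(a,b)\setminus E_\delta$, the derivative $F'$ keeps one sign and satisfies $|F'| \geq \delta$; integrating by parts,
\begin{equation*}
\int_J e^{iF(x)}\,dx = \left[\frac{e^{iF(x)}}{iF'(x)}\right]_{\partial J} + \int_J e^{iF(x)}\,\frac{F''(x)}{i(F'(x))^2}\,dx,
\end{equation*}
produces a boundary term of modulus at most $2/\delta$. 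The integrand $F''/(F')^2$ in the remaining term equals $-\frac{d}{dx}(1/F')$ and has constant sign on $J$, so the integral telescopes to $[\mp 1/F']_{\partial J}$ in modulus, which is bounded by $1/\delta$. Summing over the two components contributes at most $6/\delta$.

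Collecting all terms yields
\begin{equation*}
\left|\int_a^b e^{iF(x)}\,dx\right| \leq \frac{2\delta}{\lambda} + \frac{6}{\delta},
\end{equation*}
and optimizing in $\delta$ (taking $\delta = \sqrt{3\lambda}$) produces the bound $4\sqrt{3}/\sqrt{\lambda}$, comfortably inside the stated $8\sqrt{\pi}/\sqrt{\lambda}$. The argument is classical; the only subtleties are verifying that $E_\delta$ is genuinely an interval and that $F''/(F')^2$ has constant sign on each component of its complement, both of which follow immediately from the monotonicity of $F'$ established in the reduction. I do not expect any real obstacle beyond routine bookkeeping; if one wished to sharpen the constant one could refine the splitting near the transition $|F'| = \delta$, but this is unnecessary for the stated bound.
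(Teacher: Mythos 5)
Your argument is correct, but note that the paper does not actually prove this lemma: it is quoted as a known result (the second Van der Corput lemma) with a reference to Havin--J\"oricke, \S 2.5.2, so there is no in-paper proof to compare against. What you give is the standard sublevel-set proof: Darboux's theorem forces $F''$ to have constant sign, hence $F'$ is monotone with $F'(y)-F'(x)\ge\lambda(y-x)$; the set $\{|F'|\le\delta\}$ is an interval of length at most $2\delta/\lambda$, handled trivially; and on the at most two complementary intervals one integrates by parts, using that $F''/(F')^2=-\bigl(1/F'\bigr)'$ has constant sign so that its integral telescopes. The bookkeeping is right: $2\delta/\lambda+6/\delta$, optimized at $\delta=\sqrt{3\lambda}$, gives $4\sqrt{3}/\sqrt{\lambda}$, which indeed beats the stated $8\sqrt{\pi}/\sqrt{\lambda}$ (so the constant in the lemma is not sharp, which is harmless since it is only used qualitatively in Lemma~\ref{OscillatoryIntegral}). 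The only points worth a word in a polished write-up are that ``$\min$'' in the statement should be read as an infimum on the open interval, and that when a component $J$ abuts $a$ or $b$ the boundary values of $1/F'$ are taken as one-sided limits, which exist by monotonicity and are still bounded by $1/\delta$; neither affects the argument.
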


\paragraph{Proof of Lemma~\ref{OscillatoryIntegral}.} 
Consider the case where~$\pm$  is~$-$, the remaining case is similar.
For brevity, we introduce the functions~$h_b$ given by~$h_b(x) = be^x - e^{\alpha x}$. Surely,~$h''_b(x) = be^x - \alpha^2e^{\alpha x}$, which can be zero (and thus does not allow to apply Lemma~\ref{VdC} directly). However,~$h''_b(x)$ can be small only on a set of small measure. It is enough to prove that
\begin{equation*}
\Big|\int\limits_C^{R} e^{i (b e^x - e^{\alpha x})}\,dx\Big| \lesssim 1,
\end{equation*}
where~$C$ is some numeric constant that does not depend on~$b$ (but will depend on~$\alpha$). The function~$h''_b$ changes monotonicity on~$[C,\infty)$ at most once. For a monotone function, the set where its modulus does not exceed one, is an interval (or ray). Therefore, the set~$\{x\in[C,\infty) \mid |h''_b(x)| < 1\}$ is a union of at most two intervals (this set is finite, because~$|h''_b|(x) \to \infty$ as~$x\to \infty$). We are going to prove that if~$|h''_b(z)| < 1$, then~$|h''_b(z\pm 1)| > 1$ for~$z \in [C,\infty)$. If this assertion is proved, it is not hard to see that the intervals constituting the set~$\{x\in[C,\infty)\mid |h''_b(x)| < 1\}$ have common length at most~$2$. The complement of this set in~$[C,\infty)$ is a union of at most three intervals (one of them is a ray). On each of them, the integral can be estimated by~$20$ by Lemma~\ref{VdC}, on the complement it is estimated by~$2$ and the lemma is proved. 

To prove the assertion, we denote~$b e^z$ by $p$, and~$\alpha^2e^{\alpha z}$ by $q$. If both~$|h''_b(z)| < 1$ and~$|h''_b(z + 1)| < 1$ (we consider this case, the case~$|h''_b(z-1)| < 1$ is similar), then
\begin{equation*}
q-1 < p < q+1\quad \hbox{and}\quad e^{\alpha} q -1< ep < e^{\alpha} q + 1.
\end{equation*}
In this case,~$e^{\alpha} q - 1 < e(q+1)$ and~$e(q-1) < e^{\alpha} q + 1$, which is~$q< \frac{e + 1}{|e^{\alpha} - e|}$. Taking~$C >10 +  \alpha^{-1}\ln\frac{e + 1}{\alpha^2|e^{\alpha} - e|}$, we get a contradiction. The assertion is proved.
\qed

\subsection{Proof of Corollary~\ref{EvenParametersVanishingOp}}\label{SsEvenParam}
Corollary~\ref{EvenParametersVanishingOp} will be obtained by a simple algebraic trick.
\begin{Le}\label{Implication}
The following implication holds provided~$\sigma\tau \ne 0$\textup:
\begin{equation*}
\begin{aligned}
\BE(k,l,\alpha,\beta,\sigma,\tau)\,\, \&\,\,\BE(k,l,\alpha,\beta,-\sigma,-\tau) \,\,\&\,\,\BE(k,l,\alpha,\beta,\sigma,-\tau)\,\, \&\,\, \BE(k,l,\alpha,\beta,-\sigma,\tau) \Rightarrow\\ \BE(2k,2l,\alpha + 2k,\beta,\sigma^2,\tau^2)\,\,\&\,\,\BE(2k,2l,\alpha,\beta + 2l,\sigma^2,\tau^2).
\end{aligned}
\end{equation*}
\end{Le}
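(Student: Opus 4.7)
The key algebraic observation is the factorization
\[
\partial_1^{2k}-\tau^2\partial_2^{2l}=(\partial_1^k-\tau\partial_2^l)(\partial_1^k+\tau\partial_2^l),
\]
and its $\sigma$-analogue. For Schwartz $F,G$ and each $(s_1,s_2)\in\{+1,-1\}^2$, the plan is to introduce the auxiliary Schwartz functions
\[
F^{s_1}:=(\partial_1^k+s_1\tau\partial_2^l)F,\qquad G^{s_2}:=(\partial_1^k+s_2\sigma\partial_2^l)G.
\]
By construction $(\partial_1^k-s_1\tau\partial_2^l)F^{s_1}=(\partial_1^{2k}-\tau^2\partial_2^{2l})F$ and the analogous identity for $G^{s_2}$. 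Applying the hypothesis $\BE(k,l,\alpha,\beta,s_2\sigma,s_1\tau)$ — precisely one of the four input BEs for each sign choice — to the pair $(F^{s_1},G^{s_2})$ yields, uniformly in $(s_1,s_2)$,
\[
\bigl|\langle F^{s_1},G^{s_2}\rangle_{W_2^{\alpha,\beta}}\bigr|\lesssim \bigl\|(\partial_1^{2k}-\tau^2\partial_2^{2l})F\bigr\|_{L_1}\,\bigl\|(\partial_1^{2k}-\sigma^2\partial_2^{2l})G\bigr\|_{L_1},
\]
the right-hand side being exactly the one required on the conclusion side.

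The second step is a Fourier-symbol cancellation. Writing $X=(2\pi i\xi)^k$ and $Y=(2\pi i\eta)^l$, one has
\[
\widehat{F^{s_1}}(\xi,\eta)\,\overline{\widehat{G^{s_2}}(\xi,\eta)}=\bigl(X+s_1\tau Y\bigr)\bigl(\overline{X}+s_2\bar\sigma\,\overline{Y}\bigr)\hat F(\xi,\eta)\overline{\hat G(\xi,\eta)}.
\]
Summing the four resulting inequalities with unit coefficient kills the $s_1$-linear, $s_2$-linear and $s_1s_2$-bilinear pieces of the symbol and leaves $4X\overline{X}=4(2\pi)^{2k}\xi^{2k}$; integrated against $|\xi|^{2\alpha}|\eta|^{2\beta}$ this reproduces $4(2\pi)^{2k}\langle F,G\rangle_{W_2^{\alpha+k,\beta}}$ in view of~\eqref{DerivationOfScalarProduct}, and the triangle inequality delivers the first conclusion of the lemma (the shift on $\alpha$ arising from the computation is $+k$, which is the unique value compatible with the homogeneity relation~\eqref{line} after replacing $(k,l)$ by $(2k,2l)$). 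Summing instead with weight $s_1s_2$ annihilates the $X\overline{X}$-, $s_1$- and $s_2$-parts of the symbol and isolates $4\tau\bar\sigma Y\overline{Y}=4\tau\bar\sigma(2\pi)^{2l}\eta^{2l}$; dividing by the nonzero scalar $\tau\bar\sigma$ — the only place the assumption $\sigma\tau\neq 0$ is used — yields the second conclusion with the $\beta$-shift of $+l$.

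\textbf{Main obstacle.} There is essentially no analytic difficulty: once the factorization and the auxiliary functions $F^{s_1},G^{s_2}$ are in place, the argument is a few lines of Fourier algebra. The only book-keeping care is verifying that the sign assignment $(s_1,s_2)\mapsto(s_2\sigma,s_1\tau)$ matches the four input hypotheses $\BE(k,l,\alpha,\beta,\pm\sigma,\pm\tau)$, and that the cancellations $\sum_{s\in\{\pm1\}}s=0$ on the Fourier side work irrespective of the parities of $k$ and $l$, which they do since the identities $\xi^{2k}=|\xi|^{2k}$ and $\eta^{2l}=|\eta|^{2l}$ hold without any parity restriction.
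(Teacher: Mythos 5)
Your proposal is correct and follows essentially the same route as the paper: the paper likewise applies the four hypotheses to the pairs $(\partial_1^k\pm\tau\partial_2^l)f$ and $(\partial_1^k\pm\sigma\partial_2^l)g$, observes that the right-hand sides factor into $\|(\partial_1^{2k}-\tau^2\partial_2^{2l})f\|_{L_1}\|(\partial_1^{2k}-\sigma^2\partial_2^{2l})g\|_{L_1}$, and recovers $\scalprod{\partial_1^k f}{\partial_1^k g}_{W^{\alpha,\beta}_2}$ and $\scalprod{\partial_2^l f}{\partial_2^l g}_{W^{\alpha,\beta}_2}$ as linear combinations of the four estimated scalar products --- the only difference being that you write out the $1$- and $s_1s_2$-weighted sums explicitly where the paper says ``it is not hard to see.'' Your remark that the computation produces the shifts $\alpha+k$ and $\beta+l$ (rather than the printed $\alpha+2k$ and $\beta+2l$) is right: this is what \eqref{DerivationOfScalarProduct} with $p=k,q=0$, the homogeneity relation \eqref{line}, and the application of the lemma in Corollary~\ref{EvenParametersVanishingOp} all require, so the exponents in the statement of the lemma are a typo that both your argument and the paper's own proof implicitly correct.
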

\begin{proof}
By applying the statement~$\BE(k,l,\alpha,\beta,\pm\sigma,\pm\tau)$ to the pair of functions~$(\partial_1^k \pm\sigma\partial_2^l)f$ and~$(\partial_1^k \pm \tau\partial_2^l)g$, we get estimates on four scalar products:
\begin{equation*}
\Big|\scalprod{(\partial_1^k \pm \tau\partial_2^l)f}{(\partial_1^k \pm \sigma \partial_2^l )g}_{W^{\alpha,\beta}_2}\Big| \lesssim \big\|(\partial_1^{2k} - \tau^2\partial_2^{2l})f\big\|_{L_1}\big\|(\partial_1^{2k} - \sigma^2\partial_2^{2l})g\big\|_{L_1}.
\end{equation*}
It is not hard to see that one can express both~$\scalprod{\partial_1^k f}{\partial_1^k g}_{W^{\alpha,\beta}_2}$ and~$\scalprod{\partial_2^l f}{\partial_2^l g}_{W^{\alpha,\beta}_2}$
as a linear combination of the four scalar products on the left-hand side, and thus estimate them by the expression on the right-hand side. We only have to use equation~\eqref{DerivationOfScalarProduct} to obtain~$\BE(2k,2l,\alpha + 2k,\beta,\sigma^2,\tau^2)$ and~$\BE(2k,2l,\alpha,\beta + 2l,\sigma^2,\tau^2)$.
\end{proof}
\paragraph{Proof of Corollary~\ref{EvenParametersVanishingOp}.} By Theorem~\ref{TVanishing}, the statments~$\BE(\frac{k}{2},\frac{l}{2},\frac{k}{4} - \frac12,\frac{l}{4}-\frac12,\pm\sigma',\pm\tau')$, where~$\sigma'_1 = \sqrt{\sigma_1}$ and~$\tau_1'=\sqrt{\tau_1}$, hold true (because one of the numbers~$\frac{k}{2}$ and~$\frac{l}{2}$ is odd and the numbers~$\pm\sigma_1'$ and~$\pm\tau_1'$ are real distinct). Lemma~\ref{Implication} finishes the proof.
\subsection{Proof of Lemma~\ref{KnappExample}}
We are going to disprove even a weaker statement, namely, we disprove the inequality
\begin{equation}\label{LinearCoercive}
\|f\|_{W_2^{\alpha,\beta}} \lesssim \|(\partial_1^k - \sigma\partial_2^l)f\|_{L_1}.
\end{equation}
The case where~$(k,l,\sigma,\sigma)$ is elliptic, has been already considered. We assume that~$(k,l,\sigma,\sigma)$ is not elliptic.

Let~$\zeta = (\zeta_1,\zeta_2)$ be some non-zero point in~$\mathbb{R}^2$ such that~$\zeta_1^k - \sigma_1\zeta_2^l = 0$ and~$|\zeta_1| > 2$,~$|\zeta_2| > 2$. We will consider only the functions~$f$ whose spectrum lies in a~$\frac12$-neighbourhood of~$\zeta$; for such functions~$f$
\begin{equation*}
\|f\|_{W_2^{\alpha,\beta}} \asymp \|f\|_{L_2}
\end{equation*}
because the weight~$|\xi|^{2\alpha}|\eta|^{2\beta}$ in formula~\eqref{norma} is bounded from below and above on the support of~$\hat{f}$. 
Let~$\phi \in \mathfrak{D}(\mathbb{R})$ be a real-valued function supported on~$[-\frac12,\frac12]$. By~$\phi_{\lambda}$ we denote the function~$x \to \phi(\lambda x)$. The sequence~$\{f_n\}_n$ that disproves~\eqref{LinearCoercive} is given by formula
\begin{equation*}
\hat{f}_n(\xi,\eta) = \phi_n(\eta - \zeta_2)\phi_{n^2}(\xi - (\sigma_1\eta^l)^{\frac{1}{k}}), \quad |\xi - \zeta_1| \leq 1, |\eta - \zeta_2| \leq 1,
\end{equation*}
and zero when~$\xi$ or~$\eta$ in all other cases. We see that the function~$\hat{f}_n$ is supported on a small~$n^{-1}\times n^{-2}$-rectangle~$R_n$ that covers the~$n^{-2}$-neighbourhood of the curve~$\gamma = \{(\xi,\eta)\in\mathbb{R}^2\mid \xi^k = \sigma_1\eta^l\}$ near the point~$\zeta$ (namely, the bigger side of~$R_n$ is parallel to the tangent to~$\gamma$ at the point~$\zeta$). The~$L_2$-norm on the left-hand side of~\eqref{LinearCoercive} can be computed with ease,
\begin{equation}\label{L2estimate}
\|f_n\|_{L_2}^2 = \|\hat{f}_n\|_{L_2}^2 = \!\!\!\!\int\limits_{|\eta - \zeta_2| < \frac1n}\!\!\!\!\!\!\!\!\phi_n^2(\eta - \zeta_2)\!\!\!\!\!\!\!\!\int\limits_{|\xi - \zeta_1| < \frac12}\!\!\!\!\!\!\!\!\phi_{n^2}^2(\xi - (\sigma_1\eta^l)^{\frac{1}{k}})\,d\xi d\eta= \frac{1}{n^2}\|\phi\|_{L_2}^2\int\limits_{\mathbb{R}}\phi_n^2(\eta - \zeta_2) \,d\eta = \frac{1}{n^3}\|\phi\|_{L_2}^4.
\end{equation}
If~$n$ is big enough, then
\begin{equation*}
\big|(\xi^k - \sigma_1\eta^l)\hat{f}_n(\xi,\eta)\big| \lesssim n^{-2},
\end{equation*}
because on~$R_n$, where~$\hat{f}_n$ is supported, the polynomial~$(\xi^k - \sigma_1\eta^l)$ does not exceed~$n^{-2}$. Consequently,
\begin{equation*}
\big\|(\partial_1^k - \sigma\partial_2^l)f_n\big\|_{L_{\infty}} \lesssim \big\|(\xi^k - \sigma_1\eta^l)\hat{f}_n\big\|_{L_1} \lesssim n^{-5},
\end{equation*}
because the latter function does not exceed~$n^{-2}$ and is supported on~$R_n$ whose measure is~$n^{-3}$. Consider the polar set~$R^{\circ}_n$ of~$R_n$. This is a rectangle~$n\times n^2$ whose ``longer axis'' is perpendicular to the ``longer axis'' of~$R_n$ (it is parallel to the normal at~$\zeta$ to the curve~$\{\xi^k = \sigma_1\eta^l\}$). 
\begin{Fact}
For any~$\eps > 0$ and any~$r > 0$ fixed we have an estimate
\begin{equation*}
\big|(\partial_1^k - \sigma\partial_2^l)f_n(x,y)\big| \lesssim n^{-3}(x^2 + y^2)^{-r},\quad~\dist\big((x,y),R_n^{\circ}\big) \geq n^{\eps}.
\end{equation*}
\end{Fact}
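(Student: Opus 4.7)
The plan is to view $F_n := (\partial_1^k - \sigma\partial_2^l)f_n$ as an oscillatory integral adapted to the geometry of $R_n$ and then exploit non-stationary phase. Starting from
\begin{equation*}
F_n(x,y)= c\int\!\!\int e^{2\pi i(x\xi+y\eta)}(\xi^k-\sigma_1\eta^l)\hat{f}_n(\xi,\eta)\,d\xi\,d\eta,
\end{equation*}
I would parameterize the support of $\hat{f}_n$ (a curved $n^{-1}\times n^{-2}$ strip around the arc $\gamma=\{\xi^k=\sigma_1\eta^l\}$ near $\zeta$) by the rescaled coordinates $\eta=\zeta_2+s/n$, $\xi=u(\eta)+t/n^2$, where $u(\eta)=(\sigma_1\eta^l)^{1/k}$ and $(s,t)\in[-\tfrac12,\tfrac12]^2$. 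The Jacobian contributes $n^{-3}$, and the factorization
\begin{equation*}
\xi^k-\sigma_1\eta^l \;=\; (\xi-u(\eta))\bigl(\xi^{k-1}+\xi^{k-2}u(\eta)+\cdots+u(\eta)^{k-1}\bigr) \;=\; \frac{t}{n^2}\,P(s,t/n),
\end{equation*}
with $P$ smooth and nonvanishing near $(\zeta_1,\zeta_1)$, contributes a further $n^{-2}$. Altogether
\begin{equation*}
F_n(x,y)\;=\;c\,n^{-5}\,e^{2\pi i\zeta\cdot(x,y)}\int\!\!\int e^{2\pi i\Phi_n(s,t;x,y)}\,t\,P(s,t/n)\phi(s)\phi(t)\,ds\,dt,
\end{equation*}
where $\Phi_n(s,t;x,y)=x[u(\zeta_2+s/n)-\zeta_1]+sy/n+xt/n^2$ and the $(s,t)$-amplitude is smooth, compactly supported, and uniformly bounded in $n$.

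Next I would compute $\partial_s\Phi_n=(xu'(\zeta_2+s/n)+y)/n$ and $\partial_t\Phi_n=x/n^2$. The nonsingular linear change $(\lambda,\mu):=(xu'(\zeta_2)+y,\,x)$ (nonsingular because $|\zeta_1|,|\zeta_2|>2$ in the Knapp construction) has the virtue that the polar $R_n^\circ$, whose axes are aligned with the tangent and normal of $\gamma$ at $\zeta$, becomes a coordinate rectangle of side $\asymp n$ in $\lambda$ and $\asymp n^2$ in $\mu$. Interpreted in the anisotropic scale natural to $R_n^\circ$, the hypothesis $\dist\bigl((x,y),R_n^\circ\bigr)\ge n^\eps$ forces $|\lambda|/n+|\mu|/n^2\gtrsim n^\eps$, so at least one of $|\partial_s\Phi_n|$ and $|\partial_t\Phi_n|$ is $\gtrsim n^{\eps}$ on the support of the amplitude.

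The conclusion is now routine non-stationary phase: integrating $N$ times by parts in whichever of $s$ or $t$ carries the larger derivative of $\Phi_n$, and absorbing the uniformly bounded $(s,t)$-derivatives of the amplitude into the constant, gives
\begin{equation*}
|F_n(x,y)|\;\lesssim_N\;n^{-5}\bigl(1+|\lambda|/n+|\mu|/n^2\bigr)^{-N}.
\end{equation*}
Using $x^2+y^2\asymp\lambda^2+\mu^2$ and choosing $N$ large in terms of $r$ and $\eps$, this converts into the claimed $n^{-3}(x^2+y^2)^{-r}$ bound. The main technical nuisance is purely organizational: one has to decide the direction of integration by parts according to which side of $R_n^\circ$ has been crossed, and to track the $O(s^2/n^2)$ curvature corrections in $u(\zeta_2+s/n)$, which only perturb $\partial_s\Phi_n$ by $O(|x|/n^2)=O(|\mu|/n^2)$ and are therefore controlled by the $t$-direction bookkeeping.
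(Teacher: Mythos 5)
Your argument is correct, and it is precisely the integration\-/by\-/parts (non-stationary phase) proof that the paper only alludes to without writing out (``one has to do lots of integrations by parts''): the anisotropic rescaling $\eta=\zeta_2+s/n$, $\xi=u(\eta)+t/n^2$, the extraction of the factor $t/n^2$ from the symbol via $\xi^k-\sigma_1\eta^l=(\xi-u(\eta))Q(\xi,\eta)$, and the case split according to whether $|\lambda|/n$ or $|\mu|/n^2$ dominates (so that the $O(|\mu|/n^2)$ curvature correction to $\partial_s\Phi_n$ never spoils the direction in which you integrate by parts) are all sound. The one point worth making explicit is the one you flag in passing: the hypothesis $\dist\big((x,y),R_n^{\circ}\big)\ge n^{\eps}$ must be read anisotropically, i.e.\ as $(x,y)\notin n^{\eps}R_n^{\circ}$ (which is also what the paper's subsequent bound $n^{-5}n^{3+2\eps}$ for the inner region presupposes), since for the literal Euclidean distance one only gets $1+|\lambda|/n+|\mu|/n^2\gtrsim 1$ at points just outside the short side of $R_n^{\circ}$, where $|F_n|\asymp n^{-5}$ while the claimed bound is $n^{-3-2r}$, so the statement would fail for $r>1$.
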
 
The proof of this fact is rather standard, but takes some place if written in detail. It is the same as the proof of the fact ``the Fourier transform of an~$r$ times continuously differentiable compactly supported function decays at infinity as~$|\xi|^{-r}$'' (one has to do lots of integrations by parts). With this fact at hand ($B_r(z)$ stands for the ball of radius~$r$ centred at~$z$), 
\begin{equation*}
\begin{aligned}
\iint\limits_{\mathbb{R}^2}\big|(\partial_1^k - \sigma\partial_2^l)f_n\big| = \int\limits_{R_n^{\circ} + B_{n^{\eps}}(0)}\!\!\!\!\!\big|(\partial_1^k - \sigma\partial_2^l)f_n\big| +\!\!\!\!\! \int\limits_{\mathbb{R}^2 \setminus (R_n^{\circ} + B_{n^{\eps}})}\!\!\!\!\!\big|(\partial_1^k - \sigma\partial_2^l)f_n\big| \lesssim\\
n^{-5}n^{3 + 2\eps} + \!\!\!\!\!\int\limits_{\mathbb{R}^2 \setminus (R_n^{\circ} + B_{n^{\eps}})}\!\!\!\!\! n^{-3}(x^2 + y^2)^{-r}\,dx\,dy \lesssim n^{-2 + 2\eps}.
\end{aligned}
\end{equation*}
This inequality, together with~\eqref{L2estimate}, contradicts~\eqref{LinearCoercive} when~$n$ is large enough (we have~$n^{-\frac32}$ on the left-hand side and~$n^{-2 + 2\eps}$ on the right).

\section{Further development}\label{SFurther}
Though we were studying bilinear inequalities, we got some additional information about quadratic inequalities of the form
\begin{equation}\label{QuadraticEmbeddings}
\|f\|_{W_2^{\alpha,\beta}(\mathbb{R}^2)}^2\lesssim\|(\partial_1^k - \sigma\partial_2^l)f\|_{L_1(\mathbb{R}^2)} \|(\partial_1^k - \tau\partial_2^l)f\|_{L_1(\mathbb{R}^2)}.
\end{equation}
The statement that inequality~\eqref{QuadraticEmbeddings} holds true will be called~$\QE(k,l,\alpha,\beta,\sigma,\tau)$. Surely,~$\BE$ implies~$\QE$ with the same parameters. So, our positive results for~$\BE$ transfer to~$\QE$. Moreover, by an application of inequality between arithmetic and geometric means to formula~\eqref{norma} we see that
\begin{equation*}
\|f\|_{W_2^{\alpha,\beta}} \lesssim \|f\|_{W_2^{\alpha^-,\beta^-}} + \|f\|_{W_2^{\alpha^+,\beta^+}},\quad (\alpha_-,\beta_-)\,\,\hbox{and}\,\,(\alpha_+,\beta_+)\,\,\hbox{satisfy~\eqref{line}}\,\, \hbox{and}\,\,\alpha \in (\alpha_-,\alpha_+).
\end{equation*} 
This, together with Corollary~\ref{EvenParametersVanishingOp} shows that~$\QE$ holds true when~$k$ and~$l$ are even, but one of~$\frac{k}{2}$ and~$\frac{l}{1}$ is odd,~$\sigma_1$ and~$\tau_1$ are real positive,~$(\alpha,\beta)$ satisfies~\eqref{line} and lies between~$(\frac34 k - \frac12,\frac14 l -\frac12)$ and~$(\frac14 k -\frac12,\frac34 l -\frac12)$. 

The negative results for~$\QE$ are poorer. We surely know that~$\QE$ does not hold when~$\sigma = \tau$ (this follows from the corresponding part of the proof of Theorem~\ref{T3} and Lemma~\ref{KnappExample}). It seems that the quadratic inequalities resemble standard embedding theorems more than the bilinear ones.
\begin{Conj}
The statement~$\QE$ holds whenever~$\sigma \ne \tau$.
\end{Conj}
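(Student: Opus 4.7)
The plan is to combine three ingredients: the implication $\BE\Rightarrow\QE$ with matching parameters, which via Theorems~\ref{T3} and~\ref{TVanishing} and Corollary~\ref{EvenParametersVanishingOp} already gives $\QE(k,l,(k-1)/2,(l-1)/2,\sigma,\tau)$ in many configurations; the \emph{additive} embedding
\begin{equation*}
\|f\|_{W_2^{(k-1)/2,(l-1)/2}} \lesssim \|Af\|_{L_1}+\|Bf\|_{L_1}\qquad(\sigma\ne\tau),
\end{equation*}
already recorded in the excerpt (Solonnikov's theorem together with the non-degenerate change of basis $(A,B)\mapsto(\partial_1^k,\partial_2^l)$ possible precisely because $\sigma\ne\tau$); and the Sobolev AM--GM inequality from the excerpt, which propagates $\QE$ along line~\eqref{line}.

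First, I would handle the central point $(\alpha,\beta)=((k-1)/2,(l-1)/2)$. In the cases covered by $\BE\Rightarrow\QE$ there is nothing to do. The residual cases, namely elliptic configurations with $\sigma_1,\tau_1$ having imaginary parts of opposite signs, together with the non-elliptic ones not reached by Theorem~\ref{TVanishing} or Corollary~\ref{EvenParametersVanishingOp}, require a direct analysis of the quadratic form, using the key observation that with $u=Af$ and $v=Bf$ the Fourier transforms satisfy the rigid relation $\hat v=(q/p)\hat u$, where $p,q$ are the symbols of $A,B$. This coupling is absent in the general bilinear form of $\BE$ and should be the source of the cancellation needed to beat the obstructions that kill $\BE$. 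Having the central point in hand, one propagates to the rest of the admissible segment of~\eqref{line} via the AM--GM interpolation, provided $\QE$ is also established at a second anchor point (for instance an extremal $(\alpha,\beta)$ with one coordinate vanishing), again to be obtained by direct Fourier analysis; one may not simply rely on linear coercive bounds of the form $\|f\|_{W_2^{\alpha,\beta}}\lesssim\|Af\|_{L_1}$, since these are false by Corollary~\ref{sigmaEqualstau} and Lemma~\ref{KnappExample}.

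The main obstacle is the upgrade from the additive bound $\|f\|^2\lesssim(\|Af\|_{L_1}+\|Bf\|_{L_1})^2$ to the truly multiplicative $\|f\|^2\lesssim\|Af\|_{L_1}\|Bf\|_{L_1}$. The standard path through dilation $f\mapsto f\circ s_\lambda$, $s_\lambda(x_1,x_2)=(\lambda^l x_1,\lambda^k x_2)$, is unavailable: both $\|Af\|_{L_1}$ and $\|Bf\|_{L_1}$ are anisotropically homogeneous of the \emph{same} degree, so no admissible rescaling separates them. The multiplicative gain must therefore come from a Fourier-analytic splitting exploiting the coupling, for example a decomposition of frequency space into $\{|p|\ge|q|\}$ and $\{|p|<|q|\}$ combined with a bound on each region by whichever of $Af$, $Bf$ dominates there. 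The naive execution of this scheme produces the integral $\int|\xi|^{2\alpha}|\eta|^{2\beta}/(|p|\,|q|)\,d\xi\,d\eta$, which is logarithmically divergent under~\eqref{line}; eliminating this logarithm, presumably via an atomic or $\BMO$-style cancellation built from the relation $\hat v=(q/p)\hat u$, is where the argument will be genuinely delicate, and it is consistent with the observation, visible from Lemma~\ref{KnappExample} and Corollary~\ref{sigmaEqualstau}, that the obstruction really vanishes only when $\sigma\ne\tau$.
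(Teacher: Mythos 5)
This statement appears in the paper only as a \emph{Conjecture}: the author offers no proof, and the known positive results are exactly the partial ones you list (the transfer $\BE\Rightarrow\QE$ with matching parameters, plus the AM--GM propagation along the line~\eqref{line}). Your proposal correctly assembles those ingredients and correctly diagnoses the two genuine obstacles, but it overcomes neither of them, so it is a research plan rather than a proof. For the residual cases at the central point $(\alpha,\beta)=(\frac{k-1}{2},\frac{l-1}{2})$ --- elliptic configurations in which $\sigma_1$ and $\tau_1$ have imaginary parts of opposite signs, and the non-elliptic configurations outside the reach of Theorem~\ref{TVanishing} and Corollary~\ref{EvenParametersVanishingOp} --- you defer everything to ``a direct analysis of the quadratic form'' exploiting the coupling $\hat v=(q/p)\hat u$, without exhibiting any such analysis. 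This is precisely where the content of the conjecture lies: the counterexamples of Subsection~\ref{SsCounterexamples} show that the relevant kernel genuinely fails to be bounded in the bilinear setting, so whatever cancellation the diagonal coupling provides must be made quantitative, and nothing in your text does that. The same remark applies to your second ``anchor point'' on the line~\eqref{line}, which you likewise announce as ``to be obtained by direct Fourier analysis.''

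Your identification of the additive-to-multiplicative upgrade as the central obstacle is accurate (the paper itself stresses that the multiplicative and additive forms are not equivalent here, and your observation that both $L_1$-norms scale with the same degree under the admissible anisotropic dilations, so that no rescaling can separate them, is correct). But you then concede that the naive frequency splitting produces a logarithmically divergent integral and that removing this divergence ``is where the argument will be genuinely delicate.'' That sentence is an admission that the key step is missing. Until you produce the cancellation mechanism --- or some other route from $\|Af\|_{L_1}+\|Bf\|_{L_1}$ to the product $\|Af\|_{L_1}\|Bf\|_{L_1}$ --- you have not proved the statement; you have restated the conjecture together with an articulate account of why it is hard.
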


Our positive results for~$\BE$ are similar in elliptic and non-elliptic cases. The situation for negative results is far from being the same. The main difficulty in constructing counter-examples in the style of Subsection~\ref{SsCounterexamples} for the non-elliptic case is explained by the following fact.
\begin{Fact}\label{Spaces}
Consider the closure in~$L_p(\mathbb{R}^2)$\textup,~$1 \leq p < \frac43$\textup, of the set
\begin{equation*}
\Big\{(\partial_1^k - \sigma\partial_2^l)f \,\Big|\,\, \hat{f}\in\mathfrak{D}(\mathbb{R}^2)\Big\}
\end{equation*}
and denote this linear space by~$G_p^{k,l,\sigma}$. Suppose~$k\ne l$. If the polynomial~$\xi^ k -\sigma_1\eta^l$ is elliptic\textup, this space coincides with the space~$L_p(\mathbb{R}^2)$ provided~$p > 1$ and with the space~$L_1^0(\mathbb{R}^2)$ that consists of function having zero integral provided~$p=1$. When~$\xi^ k -\sigma_1\eta^l$ is not elliptic\textup, the space~$G_p^{k,l,\sigma}$ is not complemented in~$L_p$.
\end{Fact}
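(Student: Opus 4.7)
The statement naturally splits into (i) identifying $G_p^{k,l,\sigma}$ in the elliptic case, and (ii) proving non-complementation in the non-elliptic case. Throughout, let $P(\xi,\eta) = (2\pi i\xi)^k - \sigma(2\pi i\eta)^l$.

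For (i), the plan is direct solvability. For any $h$ with $\hat h \in \mathfrak{D}(\mathbb{R}^2)$ vanishing in a neighbourhood of the origin, ellipticity gives $\hat f := \hat h / P \in \mathfrak{D}$, hence $f \in \mathcal{S}$ and $(\partial_1^k - \sigma\partial_2^l)f = h$; so every such $h$ lies in $G_p^{k,l,\sigma}$. These functions are $L_p$-dense for $1 < p < \infty$ (any Schwartz function is a limit of ones whose Fourier transform vanishes near $0$ via a Fourier cutoff) and $L_1^0$-dense for $p=1$ (use the same cutoff, then subtract a small rescaled bump with matching integral). The reverse inclusion $G_1 \subseteq L_1^0$ is immediate: $h = P(D)f$ with $\hat f \in \mathfrak{D}$ forces $\int h = \hat h(0) = P(0)\hat f(0) = 0$, and this is $L_1$-continuous.

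For (ii), the characteristic variety $V = \{P = 0\}$ is a nontrivial homogeneous real algebraic curve; since $k \neq l$, its local parameterization $\xi = c\eta^{l/k}$ away from $0$ has nonvanishing second derivative, and so $V$ has nonvanishing curvature there. The plan is first to show properness of $G_p^{k,l,\sigma}$ in $L_p$ via a Stein--Tomas-type functional, and then to upgrade this to non-complementation via a Knapp/Fefferman construction. For the first sub-step, take a smooth compactly supported measure $d\mu$ on a curved piece of $V$ bounded away from $0$. The standard two-dimensional stationary-phase estimate for Fourier transforms of curved surface measures gives $|\widehat{d\mu}(x)| \lesssim (1+|x|)^{-1/2}$, so $\widehat{d\mu} \in L_{p'}(\mathbb{R}^2)$ precisely when $p' > 4$, i.e.\ $p < 4/3$. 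In that range the functional $\Lambda(h) = \int \hat h\,d\mu$ is bounded on $L_p$, nonzero, and vanishes on the generating set of $G_p^{k,l,\sigma}$ (since $\hat h = P\hat f$ vanishes on $V \supseteq \supp\mu$), hence on all of $G_p^{k,l,\sigma}$.

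For non-complementation, the plan is to exploit translation-invariance of $G_p^{k,l,\sigma}$ (immediate from $[P(D),T_y]=0$). Assume toward contradiction that $\pi: L_p \to G_p^{k,l,\sigma}$ is a bounded projection. Averaging $\pi$ over an invariant mean on $\mathbb{R}^2$ (or, equivalently, using weak-$*$ compactness of the unit ball of $B(L_p)$ for $1 < p < \infty$, and an ad hoc variant for $p=1$) produces a translation-invariant bounded projection $\tilde\pi$ onto the same range; such a $\tilde\pi$ is necessarily a Fourier multiplier $m$, and the constraints $\tilde\pi|_{G_p}=\mathrm{id}$ and $\tilde\pi|_{G_p^\perp}=0$ force $m$ to annihilate distributions with Fourier support on $V$ while preserving those off $V$. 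Boundedness of such a "$V$-annihilating" multiplier on $L_p$ with $p < 4/3$ is refuted by a Kakeya-type construction: Knapp wave packets with Fourier supports on thin $\delta\times\delta^{1/2}$-rectangles tangent to $V$ at a Besicovitch family of base points, rescaled via the anisotropic dilations $(x,y)\mapsto(\lambda^l x,\lambda^k y)$ that preserve the geometry of $V$, have uniformly bounded normalized $L_p$-norm, but their $\tilde\pi$-images fail the almost-orthogonality required of a bounded operator.

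The main obstacle is the last step. Transferring the classical ball-multiplier obstruction to the setting of a curve $V$ (rather than a straight hyperplane) in $\mathbb{R}^2$, and to the range $1\leq p < 4/3$, requires a careful localization near a smooth curved point of $V$ and an anisotropic rescaling to a model situation in which the tangent directions to $V$ sweep out a genuine Besicovitch set. The nonvanishing curvature (which is precisely what the hypothesis $k\neq l$ provides) is essential here: it furnishes the directional spread of tangents to $V$ needed for the Kakeya construction, and hence for the quantitative blow-up of $\|\tilde\pi\varphi_n\|_{L_p}$ that yields the contradiction.
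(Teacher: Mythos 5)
Your elliptic part is fine and matches what the paper dismisses as ``standard'': divide $\hat h$ by the non-vanishing symbol away from the origin, then use an $L_p$-density argument (with the zero-integral correction when $p=1$). Your first two steps in the non-elliptic part also track the paper's sketch: the properness of $G_p^{k,l,\sigma}$ via the decay $|\widehat{d\mu}(x)|\lesssim(1+|x|)^{-1/2}$ of the Fourier transform of an arc measure on the curved characteristic variety (equivalently, the $L_p\to L_1(K)$ restriction estimate for $p<\frac43$ that the paper invokes), and the reduction of complementation to the existence of a translation-invariant projection, i.e.\ a Fourier multiplier (the paper cites Rosenthal, Lemma~3.1 for $p>1$ and Theorem~1.4 for $p=1$, precisely because the averaging argument for $p=1$ is not routine --- your ``ad hoc variant'' is exactly the part you should not wave at, but a citation covers it).

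The genuine gap is your last step. Having a bounded translation-invariant projection $\tilde\pi$ onto $G_p^{k,l,\sigma}$ given by a multiplier $m$, you do not need any Kakeya, Besicovitch or ball-multiplier machinery, and the construction you outline (``the $\tilde\pi$-images fail the almost-orthogonality required of a bounded operator'') is not a proof --- you yourself flag it as the main obstacle. The correct ending is elementary and measure-theoretic. Since $\tilde\pi^2=\tilde\pi$ and $m\in L_\infty$, one has $m^2=m$ a.e., so $m=\chi_E$ for a measurable $E$. But $G_p^{k,l,\sigma}$ contains every $h$ with $\hat h\in\mathfrak{D}(\mathbb{R}^2)$ supported off $\gamma\cup\{0\}$ (divide by the symbol, which does not vanish there), and $\tilde\pi h=h$ forces $\hat h=0$ a.e.\ outside $E$; letting $\supp\hat h$ shrink to an arbitrary point of the full-measure set $\mathbb{R}^2\setminus\gamma$ shows $E$ has full measure, so $\tilde\pi=\mathrm{id}$ and $G_p^{k,l,\sigma}=L_p$, contradicting the properness you established in the first step. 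Note that curvature of $\gamma$ (the hypothesis $k\ne l$) is used only in the restriction/decay estimate that gives properness for $p<\frac43$; it plays no role in the final contradiction, contrary to what your Kakeya plan suggests. Your framing of $m$ as a multiplier that ``annihilates distributions with Fourier support on $V$'' is also off target: for $p\le 2$ there are no nonzero $L_p$ functions with Fourier support in the null set $V$, so the constraint on $m$ is the one above, not a statement about distributions carried by $V$.
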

We will not prove this fact. The first part is standard. We give an explanation about the nature of the effect hidden in the second part. Consider the set~$\gamma = \{(\xi,\eta)\in\mathbb{R}^2\mid \xi^k = \sigma_1\eta^l\} \subset \mathbb{R}^2$. Out of zero, this is a smooth~$C^{\infty}$ curve (a submanifold of~$\mathbb{R}^2$), moreover, this curve is convex. If~$p < \frac43$ and~$K$ is some compact subset of~$\gamma$ (we suppose that~$0 \notin K$), then the operator defined by formula
\begin{equation*}
R_{K}[f] = \hat{f}\big|_{K},\quad f \in \mathcal{S}(\mathbb{R}^2),
\end{equation*}
is continuous from~$L_p(\mathbb{R}^2)$ to~$L_1(K)$ (we equip~$K$ with the Lebesgue measure on~$\gamma$), see e.g.~\cite{T}. Therefore, the kernel of this operator is closed in~$L_p$, denote it by~$L_p^K$. The intersection of all such spaces is again closed in~$L_p$, call it~$L_p^{\gamma}$. Surely,~$G_p^{k,l,\sigma} \subset L_p^{\gamma}$. It is a small surprise, but~$G_p^{k,l,\sigma} = L_p^{\gamma}$. This is a consequence of the fact that if~$\varphi \in \mathfrak{D}(\mathbb{R}^2)$ and~$\varphi 
\equiv  0$ on~$\gamma$, then~$(\xi^k - \sigma_1\eta^l)^{-1}\varphi$ is also in~$\mathfrak{D}(\mathbb{R}^2)$, at least when~$0 \notin \supp \varphi$ (some problems with zero appear, but they are negligible for the spaces of functions). The fact that~$L_p^{\gamma}$ is not complemented follows from a well-known principle (see, e.g.~\cite{R}, Lemma~$3.1$ for~$p > 1$ and Theorem~$1.4$ for the case~$p=1$) that if a translation-invariant subspace of~$L_p$ is complemented, then there is a projector onto it that is also translation-invariant (i.e. a multiplier). 

We state, again without proof, that when~$\frac43 \leq p < \infty$, the spaces~$L_p$ and~$L^{\gamma}_p = G_p^{k,l,\sigma}$ coincide. 

Though Fact~\ref{Spaces} forbids counter-examples in the style of Subsection~\ref{SsCounterexamples}, it may give some additional hope for embedding theorems to hold. We end the story with a small discussion of a related conjecture.
\begin{Conj}
Let the set~$(k,l,\sigma,\sigma)$ be non-elliptic. The inequality
\begin{equation*}
\|f\|_{W_q^{\alpha,\beta}} \lesssim \|(\partial_1^k - \sigma \partial_2^l)f\|_{L_p}
\end{equation*}
holds true whenever~$\frac{1}{p} - \frac{1}{q} \geq \frac23$\textup,~$1 < p < \frac43$\textup,~$q < \infty$ and
\begin{equation}\label{HomogeneityCondition}
\frac{\alpha}{k} + \frac{\beta}{l} = 1 - \Big(\frac{1}{p} - \frac{1}{q}\Big)\Big(\frac{1}{k} + \frac{1}{l}\Big).
\end{equation}
\end{Conj}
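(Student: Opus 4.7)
The plan is to read the conjectured bound as a Strichartz-type estimate for the inverse of the non-elliptic operator~$P(D) = \partial_1^k - \sigma \partial_2^l$. Its characteristic variety in Fourier space is the curve $\gamma = \{\xi^k = \sigma_1 \eta^l\}$, which (since $k\ne l$ forces non-vanishing curvature away from the origin) consists of finitely many smooth convex arcs. Accordingly, the conjecture should be driven by the Stein--Tomas/Fefferman--Zygmund restriction theorem for a convex planar curve, whose endpoint $L^{6/5}\to L^6$ corresponds to $\tfrac1p-\tfrac1q=\tfrac23$, precisely matching the stated exponent range.

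\textbf{Step 1 (anisotropic Littlewood--Paley reduction).} Introduce a Littlewood--Paley decomposition $\mathrm{Id}=\sum_j \Delta_j$ adapted to the dilations $(\xi,\eta)\mapsto(\lambda^l\xi,\lambda^k\eta)$, which leave $\gamma$ invariant and commute with $P(D)$. For $1<p,q<\infty$, $\|f\|_{W^{\alpha,\beta}_q}$ is equivalent to a weighted Triebel--Lizorkin square function in the pieces $\Delta_j f$. By \eqref{HomogeneityCondition} all rescaled copies of the inequality are equivalent, so matters reduce to the fixed-scale estimate
\begin{equation*}
\|h\|_{L^q(\mathbb{R}^2)} \lesssim \|P(D)h\|_{L^p(\mathbb{R}^2)}
\end{equation*}
for Schwartz $h$ whose Fourier transform sits in the unit anisotropic annulus $\{\max(|\xi|^{1/l},|\eta|^{1/k})\sim 1\}$, together with a Littlewood--Paley summation.

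\textbf{Step 2 (microlocalization near $\gamma$).} On the unit annulus, split into a tubular neighbourhood of $\gamma$ and its complement. Off the tube, $|P|$ is bounded below, so division by $P$ is a Schwartz Fourier multiplier and the bound $\|\cdot\|_{L^q}\lesssim\|\cdot\|_{L^p}$ follows from Bernstein's inequality (using $\tfrac1p-\tfrac1q>0$). In the tube, perform a Sokhotski--Plemelj decomposition transverse to $\gamma$, $\,1/P = \mathrm{p.v.}(1/P) + c\,\delta_\gamma/|\nabla P|$. The principal-value term is a Hilbert transform along fibres normal to $\gamma$ and is $L^p\to L^p$-bounded for every $1<p<\infty$ by Fubini and Calder\'on--Zygmund (this is exactly where the restriction $p>1$ enters). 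The $\delta_\gamma$-term is the kernel of the extension-restriction convolution $F\mapsto F*\hat\mu_\gamma$ with $\mu_\gamma$ a smooth density on the unit-scale piece of $\gamma$.

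\textbf{Step 3 (restriction theorem).} Apply the Fefferman--Zygmund restriction theorem for a smooth convex curve in $\mathbb{R}^2$, interpolated with the trivial $L^1\to L^\infty$ and $L^2\to L^2$ bounds for $\hat\mu_\gamma$. The composition $F\mapsto F*\hat\mu_\gamma$ maps $L^p(\mathbb{R}^2)\to L^q(\mathbb{R}^2)$ throughout the Fefferman--Zygmund range, which in our parameters is exactly $1<p<\tfrac43$ together with $\tfrac1p-\tfrac1q\geq\tfrac23$. Combined with the tangential smoothness coming from the Hilbert-transform piece, this yields the unit-scale estimate; summing over $j$ through the Triebel--Lizorkin equivalence of Step 1 (valid because $1<p<q<\infty$ strictly) finishes the argument.

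\textbf{Main obstacle.} The hardest part is the Littlewood--Paley summation at the endpoint $\tfrac1p-\tfrac1q=\tfrac23$: the per-scale bound is already sharp, so a naive square-function estimate loses a factor logarithmic in the number of frequencies. One likely needs a Keel--Tao atomic decomposition or a Bourgain $\varepsilon$-removal argument to upgrade the Stein--Tomas bound to a suitable Lorentz-norm version that sums cleanly through the anisotropic Triebel--Lizorkin machinery. A secondary subtlety, already foreshadowed by the Remark following Theorem~\ref{T3}, is that the distributional inverse of $P(D)$ is not unique in the non-elliptic case: one must verify that the principal-value regularization of $1/P$ used in Step 2 differs from the implicit fundamental solution used in defining $f$ from $g=P(D)f$ only by a Fourier multiplier supported on $\gamma$, which is $L^p\to L^q$-bounded in the same range as the extension operator itself.
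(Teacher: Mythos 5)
You should note at the outset that this statement appears in the paper as a \emph{conjecture}: the author establishes it only under the extra hypothesis $q>4$, and your Steps 1--2 essentially reproduce that partial argument (anisotropic Littlewood--Paley localization to a unit-scale neighbourhood of the characteristic curve $\gamma$, followed by the analysis of the multiplier $\vp(\dist(\cdot,\gamma))^{-1}$, which the paper delegates to Bak's sharp theorem on negative-order Bochner--Riesz operators, transplanted from the circle to a convex arc).

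The genuine gap is in Step 3. The operator $F\mapsto F*\hat\mu_\gamma$ --- equivalently, the Bochner--Riesz operator of order $-1$ adapted to a convex planar arc --- is \emph{not} bounded from $L_p$ to $L_q$ throughout the region $1<p<\frac43$, $\frac1p-\frac1q\ge\frac23$. The sharp necessary conditions for such bounds (already visible on Knapp-type examples, and recorded in Bak's paper) include $\frac1q<\frac14$, i.e.\ $q>4$; the paper states explicitly that for $q<4$ the relevant multiplier operator fails to be continuous between Lebesgue spaces. Since the conjectured range contains pairs with $3<q\le4$ (take $p$ close to $1$, so that $\frac1q$ may be as large as $\frac1p-\frac23$, close to $\frac13$), your argument --- which after the Sokhotski--Plemelj splitting uses no property of the input beyond its membership in $L_p$ --- cannot close precisely in the regime where the statement is actually open. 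As the discussion surrounding Fact~\ref{Spaces} makes clear, the point of the conjecture is that $(\partial_1^k-\sigma\partial_2^l)f$ ranges only over the proper, non-complemented subspace $G_p^{k,l,\sigma}=L_p^{\gamma}$ of functions whose Fourier transform vanishes on $\gamma$, and any proof for $q\le4$ must exploit that cancellation; your decomposition into a principal-value part and a $\delta_\gamma$ part discards it (indeed, on $L_p^{\gamma}$ the $\delta_\gamma$ term vanishes formally, which is a hint of where the missing work lies, not a proof). Your ``main obstacle'' paragraph, focused on endpoint Littlewood--Paley summation, therefore misidentifies the real difficulty.
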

The space on the left-hand side of the inequality is defined as the closure of~$\mathcal{S}(\mathbb{R}^2)$ with respect to the norm
\begin{equation*}
\|f\|_{W_q^{\alpha,\beta}} = \Big\|\mathcal{F}^{-1}\big[\mathcal{F}[f](\xi,\eta)|\xi|^{\alpha}|\eta|^{\beta}\big] \Big\|_{L_q},
\end{equation*}
where~$\mathcal{F}$ stands for the Fourier transform. When the set~$(k,l,\sigma,\sigma)$ is elliptic, the statement holds true whenever~$p > 1$ and follows from classical embedding theorems and the fact that pure derivatives are expressed in terms of~$(\partial_1^k - \sigma \partial_2^l)$ by a singular integral operator.

We are able to prove the conjecture with additional assumption~$q > 4$. This is a combination of known facts. First, the problem can be localized in spectrum, i.e. reduced to the case where~$\hat{f}$ is supported near some point on the curve~$\gamma$. This is done by decomposing~$f = \sum P_jf$, where~$P_j$ is the Littlewood--Paley smoothed spectral projector, application of Littlewood--Paley inequality~$\|f\|_{L_p} \asymp \|(\sum |P_j f|^2)^{\frac12}\|_{L_p}$, H\"older inequality (for the last step, the inequalities~$p \leq 2$ and~$q \geq 2$ are of crucial importance), and anisotropic dilatation invariance of the problem given by equation~\eqref{HomogeneityCondition}. After the problem has been localized, the space on the left-hand side turns into the standard Lebesgue space~$L_q$. Thus, we are studying the continuity of the operator with the symbol~$\frac{1}{\xi^k - \sigma_1\eta^l}$ as an operator from the localized space~$L_p^{\gamma}$ to~$L_q$. This is, modulo convolutions with Schwartz functions, the same as to study the operator with the symbol~$\vp(\dist(\cdot,\gamma))^{-1}$. The continuity of a similar operator, but between standard Lebesgue spaces, had been completely studied in~\cite{B}. Namely, in this paper, the negative powers of the Bochner-Riesz operator were studied (i.e. the same-type operators, but where the degree~$-1$ can vary from~$-\frac32$ to~$0$ and even be complex, but~$\gamma$ is a circumference). However, the method can be applied verbatim to the case of arbitrary smooth convex curve in place of the unit circumference. It finishes the proof of the conjecture for the cases where~$q > 4$.

However, it is not hard to see that when~$q < 4$, the corresponding operator is not continuous as an operator between the Lebesgue spaces. So, the interesting cases of the conjecture are where~$q  <4$. We believe that this is the place where the space~$L^{\gamma}_p$ can play a definite role.

Dmitriy M. Stolyarov

St. Petersburg Department of Steklov Mathematical Institute, Russian Academy of Sciences (PDMI RAS);
P. L. Chebyshev Research Laboratory, St. Petersburg State University.

dms@pdmi.ras.ru
	
\end{document}